\newtheorem{Theorem}{Theorem}[section]
\newtheorem{Lemma}[Theorem]{Lemma}
\newtheorem{Corollary}[Theorem]{Corollary}
\def\R{{\mathbb R}}
\def\({\left(}
\def\){\right)}
\newcommand{\jump}[1]{\llbracket #1 \rrbracket}
\begin{document}

\title[Factorization method for biharmonic scattering]{Factorization method for the biharmonic scattering problem for an absorbing penetrable scatterer}

\author{Rafael Ceja Ayala}
\address{School of Mathematical and Statistical Sciences, Arizona State University, Tempe, AZ 85287}
\email{rcejaaya@asu.edu}

\author{Isaac Harris}
\address{Department of Mathematics, Purdue University, West Lafayette, IN 47907}
\email{ harri814@purdue.edu}

\author{General Ozochiawaeze}
\address{Department of Mathematics, Purdue University, West Lafayette, Indiana, 47907, USA} 
\email{oozochia@purdue.edu}

\subjclass[2010]{35R30, 35R60}

\keywords{inverse scattering, absorbing scatterer, elastic plates, biharmonic wave equation, factorization method, sampling methods, born approximation}

\begin{abstract}
This work extends the factorization method to the inverse scattering problem of reconstructing the shape and location of an absorbing penetrable scatterer embedded in a thin infinite elastic (Kirchhoff--Love) plate. With the assumption that the plate thickness is small compared to the wavelength of the incident wave, the propagation of flexural perturbations is modeled by the two--dimensional biharmonic wave equation in the frequency domain. Within this setting, we provide a rigorous justification of the factorization method and demonstrate that it yields a binary criterion for distinguishing whether a sampling point lies inside or outside the scatterer, using only the spectral data of the far--field operator. In addition, we numerically analyze the Born approximation for weak scatterers in this biharmonic scattering context and compute the relative error against exact far--field data for sample weak scatterers, thereby quantifying its validity as a limited but useful approximation. 
\end{abstract}

\maketitle

\section{Introduction}
This paper investigates the inverse scattering problem of reconstructing a penetrable, absorbing scatterer embedded in a thin infinite elastic (Kirchhoff--Love) plate. This implies that the wave propagation is governed by the biharmonic `Helmholtz' equation in the frequency domain. Recently, the study of biharmonic scattering problems has garnered significant attention due to their widespread applications, including the design of ultra--broadband elastic cloaks for vibration control in vehicles and earthquake--resistant passive systems for smart buildings \cite{PhysRevB.85.020301, PhysRevLett.103.024301, PhysRevB.79.033102}. Platonic crystals, which are periodic arrays of cavities, enable wave manipulation similar to photonic and phononic crystals \cite{farhat2014platonic, gao2018theoretical}. Acoustic black holes passively trap flexural waves, supporting applications in noise reduction, energy harvesting, and biomedical devices (see \cite{PELAT2020115316}). The biharmonic model is additionally important for non-destructive testing and structural health monitoring in the aerospace industry \cite{MEMMOLO2018568}. Recently, it has been shown that the similar biharmonic equations can be used to study sea ice and ice shelves in \cite{seaice1,seaice2}. In contrast to classical scattering problems involving wave propagation in unbounded media, biharmonic waves satisfy a fourth--order boundary value problem and describe out--of--plane displacements in thin elastic plates, presenting unique mathematical and computational challenges. Compared with the second-order differential equations like the acoustic, elastic, and electromagnetic waves, the research of the inverse scattering problem of the biharmonic wave equation is not as extensive and this work extends the literature on biharmonic scattering. The increase in order leads to unique challenges in the extension of computational approaches to the inverse scattering of biharmonic waves.

In this contribution, we provide the first extension of the factorization method to the inverse scattering problem for biharmonic waves. It has been shown in \cite{CejaAyalaHarrisSanchezVizuet2025} that the corresponding scattering problem is well--posed provided the imaginary part of the `refractive index' is positive in the scatterer. This recent work also establishes the reciprocity relationships for both near--field and far--field data.  The well--posedness is shown by considering the equivalent variational form and proving that it has the Fredholm property, then using the absorbing assumption to guarantee uniqueness. Therefore, we will assume that the scatterer is absorbing throughout the paper.  Building on this foundation, we consider the corresponding inverse shape problem of recovering the scatterer from the measured far--field data via the factorization method. To prove that the factorization method can recover the biharmonic scatterer we will employ techniques developed for acoustic scatterers which need to be modified for this new model. 

The factorization method falls under the category of sampling methods, which are non--iterative methods that recover the shape and/or location of the scatterer without requiring prior knowledge of its material properties or a forward solver. Among them, the factorization method, originally introduced by Kirsch in \cite{Kirsch1998} and the monograph \cite{kirsch2008factorization} for the inverse acoustic scattering problem, continues to be extensively studied. Compared with it's predecessor (the linear sampling method \cite{ColtonKirsch1996}), the factorization method give that the scatterer can be obtained via a range test using the far--field operator to characterize if a point is inside or outside the scatterer. This can be done in computable manner that provides a sufficient and necessary criterion for the qualitative reconstruction. The factorization method stands out for its rigorous characterization of the support of the scatterer in terms of the spectral properties of a self-adjoint form of the far--field operator. This framework provides a precise link between the analytic structure of the far--field data and the geometry of the scatterer. The factorization method has been studied for a wide range of scatterers, including sound-soft, sound-hard, and impedance obstacles, as well as acoustic, electromagnetic, and elastic scattering problems (see, e.g., \cite{cakoni2014qualitative, kirsch2008factorization, HuKirschSini2012, CharalambopoulosEtAl2006}). Despite this rich body of work, the biharmonic wave setting has not yet been addressed within the factorization method framework. 

For biharmonic waves, other sampling methods have only recently been explored. The linear sampling method has been applied to the reconstruction of clamped cavities in a thin plate using both near--field and far--field data \cite{bourgeois2020linear, GuoEtAl2024, HarrisLiOzochiawaeze}. The work presented in \cite{HarrisLiOzochiawaeze}, also adapts the extended sampling method to recover the location of clamped cavities with limited aperture data and multifrequency measurements for improved resolution. This demonstrates the adaptability of sampling--based techniques in practical measurement scenarios. The direct sampling method has also been extended to the biharmonic scattering problem for clamped cavities with the added benefit of their stability with respect to noisy data and precise resolution analysis (see \cite{HLP-directsample}). Beyond these sampling--based approaches, reverse time migration has also been implemented for biharmonic scattering problems under a variety of boundary conditions \cite{ZhuGe2025}. However, all of these works are limited to impenetrable obstacles; to date, no sampling method has been developed for the inverse shape problem associated with a biharmonic penetrable scatterer.

To bridge this theoretical gap, in the present paper, we consider an inverse biharmonic scattering problem of determining the shape and location of the support of a penetrable scatterer. We extend the factorization method to solve this inverse shape problem. We present a theoretical factorization of the far--field operator of a penetrable absorbing scatterer and rigorously study the operators associated with the factorization. In addition to this theoretical development, we also investigate the `Born approximation' for this biharmonic problem in case of the weak scatterers numerically. In our numerical experiments, we show that this can be used to compute the far--field pattern just as in the case of an acoustic scatterer. Notice that since the Born approximation holds for our biharmonic scattering problem we should have that the Multiple Signal Classification (MUSIC) algorithm can be used to recover small/weak scatterers (see, e.g., \cite{MUSIC-ammari-scattering,Kirsch2002}). Indeed, it is seen in \cite{Kirsch2002} that the MUSIC algorithm is just a discretized version of the factorization method. Even though this connection is evident, we only consider the factorization method in this study. 

The outline of this paper is as follows. In Section \ref{section_2}, we mathematically formulate the model of the inverse biharmonic scattering problem for a penetrable, absorbing medium and define our notations used throughout the paper. In Section \ref{section_3}, the factorization form of the far--field operator is derived and properties of the involved auxiliary operators are presented. Moreover, we establish a theoretical framework for the factorization method. Numerical examples are presented to illustrate the feasibility and effectiveness of our method in Section \ref{section_4}.  To this end, we show that the Born approximation can be used to compute synthetic far--field data the approximation to the exact far-field data computed via separation of variables for a small disk--shaped scatterer. Finally, the paper is concluded with some general remarks and directions for future research in Section \ref{section_conclusion}.

\section{Problem Formulation}\label{section_2}
In this section, we discuss the associated direct scattering problem. This problem models scattering in a thin elastic plate containing a penetrable subregion. Therefore, consider a bounded open set $D\subset \mathbb R^2$ with $C^2-$smooth boundary $\partial D$ representing a scatterer embedded in an infinitely thin, two--dimensional elastic plate governed by the Kirchhoff--Love model in the pure bending regime. Assume that the exterior domain $\mathbb R^2\setminus\overline D$ is simply--connected. We consider the scattering by an incident field in the form of a time--harmonic plane wave
\[
u^i(x,d)=\text{e}^{ \text{i} \kappa x\cdot d},\quad x\in \mathbb R^2,
\]
where $\kappa>0$ represents the wavenumber and $d\in\mathbb S^1$ is a unit vector indicating the direction of wave incidence.
The interaction of the incident field and the scatterer denoted by $D$ produces a total wavefield $u$ which is the superposition of the incident field $u^i$ and a radiating scattered field $u^s\in H_{\text{loc}}^2(\mathbb R^2)$ that satisfies
$$(\Delta^2-\kappa^4n(x))u= 0 \quad \text{in }\mathbb R^2.$$
Since we have that the incident field solves that biharmonic `Helmholtz' equation
$$(\Delta^2-\kappa^4)u^i= 0 \quad \text{in }\mathbb R^2$$
we obtain that the radiating scattered field $u^s$ satisfies 
\begin{align}\label{eqn1}
(\Delta^2-\kappa^4n(x))u^s&=\kappa^4(n(x)-1)u^i \quad \text{in }\mathbb R^2.
\end{align}
Just as in the acoustic scattering case, we assume that the total field $u$ satisfies some transmission conditions on the boundary $\partial D$. Therefore, we assume that that  total wavefield $u$ satisfies
\begin{align}\label{eqn2}
\jump{u}=\jump{\partial_\nu u}=\jump{\Delta u}=\jump{\partial_{\nu}\Delta u}=0,
\end{align}
where, for a given function $\phi$, the notation
\begin{align*} 
\jump{\phi}\coloneqq \phi_{+}\big|_{\partial D}-\phi_{-}\big|_{\partial D}
\end{align*}
denotes the discontinuity across the boundary $\partial D$, and the operators $(\cdot)_{+}\big|_{\partial D}$ and $(\cdot)_{-}\big|_{\partial D}$ represent the trace (or restriction) taken from $\mathbb R^2\setminus\overline D$ and $D$, respectively. In addition, the scattered field and it's are Laplacian satisfy the Sommerfield radiation condition
\begin{align}\label{SRC}
\lim_{r\to\infty}\sqrt{r}(\partial_r u^s-\text{i}\kappa u^s)=0\quad \text{and}\quad \lim_{\to\infty}\sqrt{r}(\partial_r \Delta u^s-\text{i}\kappa \Delta u^s)=0 \quad \text{for $r\coloneqq |x|$ }
\end{align}
which is assumed to be satisfied uniformly in all directions $\hat{x}\coloneqq x/|x|\in \mathbb S^1$. See \cite{CejaAyalaHarrisSanchezVizuet2025} for a detailed discussion of this scattering problem.

For the coefficient $n=n(x)$, we require that 
$$\text{supp}(n(x)-1)=\overline D \quad \text{and } \quad n\in L^{\infty}(D).$$ 
For acoustic scattering problems, $n$ is referred to as the refractive index associated with the scatterer $D$. Moreover, we assume that the scatterer is ``absorbing" which implies that the refractive index $n$ satisfies 
\begin{align}\label{absorbing_cond}
    \Im{(n(x))}&\geq \alpha>0\quad \text{a.e. }x\in D.
\end{align}
This is to insure solvability of the direct scattering problem \eqref{eqn1}--\eqref{SRC} which was established in \cite{CejaAyalaHarrisSanchezVizuet2025}. This would imply that for any fixed incident direction $d$ on the unit circle we have a corresponding scattered field $u^s( \cdot \, , d)$.

Recall, that we wish to consider the inverse problem of recovering the scatterer fro the far--field data. To this end, we will assume that we have access to the measured the far--field pattern
\[
u^{\infty}(\hat{x},d) \quad \text{ for all } \,\, \, \hat{x},d\in\mathbb S^1
\]
corresponding to the forward problem \eqref{eqn1}--\eqref{SRC}. Due to the fact that scattered field $u^s$ is radiating, we have that it has the asymptotic behavior 
\[
u^s(x)=\gamma \frac{\text{e}^{\text{i}\kappa r}}{\sqrt{r}}u^{\infty}(\hat{x},d)+O\left(\frac{1}{r^{3/2}}\right),\quad \text{as } r\to\infty
\]
where the constant 
$$ \gamma = \frac{\text{e}^{\text{i}\pi/4}}{\sqrt{8\pi \kappa}} .$$
We will make use of the fact that the scattered field 
$$u^s = u_{\text{H}} + u_{\text{M}} \quad \text{in $\mathbb R^2\setminus\overline D$}$$ 
where the propagative field $u_{\text{H}}$ and the evanescent field $u_{\text{M}}$ which satisfy the Helmholtz equation (wave number $\kappa$) and the modified Helmholtz equation (wave number $\text{i}\kappa$)
\begin{align}\label{biharm_split}
    (\Delta+\kappa^2)u_{\text{H}}=0 \quad \text{and}\quad (\Delta-\kappa^2)u_{\text{M}}=0 \quad \text{in }\mathbb R^2\setminus\overline D,
\end{align}
along with the Sommerfield radiation conditions
\begin{align}
    \lim_{r\to\infty}\sqrt{r}(\partial_r u_{\text{H}}-\text{i} \kappa u_{\text{H}})=0 \quad \text{and} \quad \lim_{r\to\infty}\sqrt{r}(\partial_r u_{\text{M}}-\text{i} \kappa u_{\text{M}})=0.
\end{align}
It is clear that this decomposition of the scattered field is accomplished by rewriting the biharmonic `Helmholtz' operator using the factorization 
$$(\Delta^2-\kappa^4)=(\Delta-\kappa^2)(\Delta+\kappa^2)=(\Delta+\kappa^2)(\Delta-\kappa^2).$$ 
From this we see that one can define the auxiliary components of the scattered field i.e. the propagative field $u_{\text{H}}$ and evanescent field $u_{\text{M}}$ by
\begin{align}\label{aux_components}
    u_{\text{H}}=-\frac{1}{2\kappa^2}(\Delta u^s-\kappa^2u^s) &\quad \text{and}\quad u_{\text{M}}=\frac{1}{2\kappa^2}(\Delta u^s+\kappa^2u^s),
\end{align}
which implies that 
\begin{align*}
    u^s=u_{\text{H}}+u_{\text{M}} &\quad \text{and}\quad \Delta u^s=-\kappa^2(u_{\text{H}}-u_{\text{M}}).
\end{align*}
By Proposition 2.2 in \cite{bourgeois2020well}, we have that the  auxiliary components have the respective asymptotic behavior as $r\to\infty$ given by 
\begin{align}
    |u_{\text{H}}|=O\left(\frac{1}{\sqrt r}\right)&\quad \text{and}\quad |u_{\text{M}}|=O\left(\frac{\text{e}^{-\kappa r}}{\sqrt{r}}\right).
\end{align}
Note, that it is also known that $\partial_r u_{\text{M}}$ will decay exponentially as $r \to \infty$ see for e.g. \cite{HLP-directsample}.

The asymptotic behavior of the the propagative field $u_{\text{H}}$ and the exponential decay of the evanescent field $u_{\text{M}}$ , implying that the far--field pattern for the scattered field is given by 
$$u^{\infty}(\hat{x},d)=u_{\text{H}}^{\infty}(\hat{x},d)$$ 
and therefore
\begin{align}\label{green_formula}
    u^{\infty}(\hat{x},d)&=\int_{\partial B_R}\left(u_{\text{H}}(y,d)\partial_{\nu(y)}\text{e}^{-\text{i}\kappa \hat{x}\cdot y}-\partial_{\nu(y)} u_{\text{H}} (y,d)\text{e}^{-\text{i}\kappa \hat{x}\cdot y}  \right)\,\text{d}s(y).
\end{align}
Notice, that the far--field data does not contain any information about the function $u_{\text{M}}$ as it decays exponentially. Even though the far--field pattern does not retain any information on the evanescent component $u_{\text{M}}$. From this we see that, given far--field data from this biharmonic scattering problem is equivalent to the far--field pattern for the Helmholtz equation. With this in mind, it would seem that similar techniques that worked for acoustic scattering can still be valid for biharmonic scattering problem. Note that is has been exploited in \cite{HLP-directsample,HarrisLiOzochiawaeze} and we wish to do the same for our problem. 

In order to continue, we recall the function
\begin{align}
    G(x,z;\kappa)&=\frac{1}{2\kappa^2}\left[\Phi_{\text{H}}(x,z;\kappa)-\Phi_{\text{M}}(x,z;\kappa)\right] \quad \text{for }x\neq z
\end{align}
is the radiating fundamental solution for the biharmonic wave operator (i.e. biharmonic `Helmholtz' equation) which is given by $(\Delta^2-\kappa^4)$, where $\Phi_{\text{P}}(x,z;\kappa)$ denotes the fundamental solution of the corresponding Helmholtz or modified Helmholtz equation, with
\begin{align*}
    \Phi_{\text{P}}(x,z;\kappa)&=\begin{dcases}
        \frac{\text{i}}{4}H_0^{(1)}(\kappa|x-z|),\quad \text{P}=\text{H},\\
        \frac{\text{i}}{4}H_0^{(1)}(\text{i}\kappa|x-z|),\quad \text{P}=\text{M}.
    \end{dcases}
\end{align*}
Here $H_{\ell}^{(1)}$ denotes the Hankel function of the first kind of order $\ell$. Due to the exponential decay of the evanescent component $\Phi_{\text{M}}(x,z;\kappa)$, the asymptotic behavior of the propagating component $\Phi_{\text{H}}(x,z;\kappa)$  as $r=|x|\to\infty$ we have that the far--field pattern for the fundamental solution $G(x,z;\kappa)$ is given by
\[
G^{\infty}(\hat{x},z;\kappa)=\frac{1}{2\kappa^2}\text{e}^{-\text{i}\kappa \hat{x}\cdot z}.
\]
Given the incident field $u^i$, the direct scattering problem for (\ref{eqn1})--(\ref{SRC}) is to find the scattered field $u^s$ for a known penetrable medium $(n,D)$. Both the uniqueness of the direct problem and its characterization as Fredholm of index zero were established via a variational approach in \cite{CejaAyalaHarrisSanchezVizuet2025}. The uniqueness proof follows the approach used for acoustic scattering, but in the biharmonic setting the ``absorbing” assumption is essential, whereas it is not in the acoustic case. The inverse scattering problem is to recover the shape and location of the support $D$ given $u^{\infty}(\hat{x},d)$ for all observation directions $\hat{x}\in\mathbb S^1$ and incident directions $d\in\mathbb S^1$. 

We recall the integration--by--parts formulas important to our analysis (see \cite{CejaAyalaHarrisSanchezVizuet2025,HarrisLiOzochiawaeze}). Assume that a bounded domain $\Omega$ is $C^2-$smooth with $\nu$ the outward unit normal. For $v,w\in H^2(\Omega,\Delta^2)$, we have
\begin{align}\label{biharmonic_Green_1}
    \int_{\Omega}(v\Delta^2w-\Delta v\Delta w)\,\text{d}x&= \int_{\partial \Omega}(v\partial_\nu\Delta w-\Delta w\partial_\nu v)\,\text{d}s
\end{align}
and
\begin{align}\label{biharmonic_Green_2}
    \int_{\Omega}(v\Delta^2w-w\Delta^2v)\,\text{d}x&=\int_{\partial \Omega}(\partial_\nu w\Delta v-w\partial_\nu \Delta v+v\partial_\nu\Delta w-\Delta w\partial_\nu v)\,\text{d}s.
\end{align}
Here, the Hilbert Space $H^2(\Omega,\Delta^2)$ is defined by 
$$H^2(\Omega,\Delta^2)\coloneqq \{v\in H^2(\Omega): \Delta^2v\in L^2(\Omega)\}$$
with the associated graph norm/inner--product. The boundary integrals in \eqref{biharmonic_Green_1}--\eqref{biharmonic_Green_2} are understood as a dual pairing from the appropriate Sobolev spaces. It is clear that these formulas are derived directly from Green's first and second theorem. To this end, we derive a Lippmann--Schwinger--type representation of the scattered field. Using the integration--by--parts formula in (\ref{biharmonic_Green_2}) when $x$ is in the interior of $D$ gives the that
\begin{align*}
   u^s(x)\chi_D&=\int_D G(x,z;\kappa)[\kappa^4 u^s(z)-\Delta^2 u^s(z)] \, \text{d}z\\
&\hspace{0.5in}+\int_{\partial D}  \Delta G(x,z;\kappa) \partial_\nu u^s_-(z)-u^s_-(z)\partial_\nu \Delta G(x,z;\kappa)\, \text{d}s(z)\\
&\hspace{0.5in}-\int_{\partial D}  \Delta  u^s_-(z)\partial_{\nu} G(x,z;\kappa)-G(x,z;\kappa)\partial_{\nu}\Delta u^s_-(z)\, \text{d}s(z),
\end{align*}
where $\chi_D$ is the indicator function on the scatterer $D$. In a similar manner, by again using integration--by--parts formula in (\ref{biharmonic_Green_2}) when $x$ is in the exterior of $D$ gives that
\begin{align*}
    u^s(x)(1-\chi_D)=&-\int_{\partial D} \Delta G(x,z;\kappa)\partial_\nu u^s_+(z)-u^s_+(z)\partial_\nu \Delta G(x,z;\kappa)\, \text{d}s(z)\\
    &\hspace{0.5in}+\int_{\partial D} \Delta u^s_+(z)\partial_\nu G(x,z;\kappa)-G(x,z;\kappa)\partial_\nu\Delta u^s_+(z)\, \text{d}s(z)\\
     &\hspace{0.5in}+\int_{\partial B_R}  \Delta G(x,z;\kappa)\partial_\nu u^s(z)-u^s(z)\partial_\nu \Delta G(x,z;\kappa)\, \text{d}s(z).
\end{align*}
where $B_R=\{ x \in \R^d \, \, : \, \, |x|<R \}$ such that $D \subset B_R$. Therefore, by adding the above expressions, we obtain the Lippmann--Schwinger--type representation of the scattered field 
\begin{align}\label{LSsf}
u^s(x,d)&=\kappa^4\int_D (n(z)-1)G(x,z)(u^s(z,d)+u^i(z,d))\,\text{d}z
\end{align}
where we make the dependance of the incident direction explicit. A similar representation was also used in \cite{timedep-biharwave}. Notice, that we have used the fact that the scattered field and fundamental solution satisfying the radiation condition \eqref{SRC} to handle the boundary integral over $\partial B_R$ by letting $R \to \infty$. Equation \eqref{LSsf} will be vital for the analysis in later sections.

\section{Factorization Method Framework}\label{section_3}
\subsection{Factorization of the far--field operator}
This section is dedicated to constructing a suitable factorization of the far--field operator so that we can use the theoretical results in \cite{kirsch2008factorization}. This will allow us to determine a suitable indicator function for the support $D$ in terms of far--field measurements. The main goal of the factorization method is to connect the support $D$ to the range of an operator defined by the measured far--field operator. We make this connection by analyzing a derived factorization of the far--field operator developed in this section.

We introduce the far--field operator $F: L^2(\mathbb S^1)\to L^2(\mathbb S^1)$ defined by
\begin{align}\label{farfield_op}
Fg(\hat{x})\coloneqq \int_{\mathbb S^1}u^{\infty}(\hat{x},d)g(d)\,\text{d}s(d),\quad g\in L^2(\mathbb S^1),\, \hat{x}\in\mathbb S^1.
\end{align}
Recall, that we assume that the refractive index satisfies $n\in L^{\infty}(D)$  along with the absorbing scatterer condition $\Im{(n)}\geq \alpha>0$ for some constant $\alpha>0$. In order to derive a suitable factorization of the far--field operator, we consider an auxiliary problem for a generic incident field. To this end, we let $f\in L^2(D)$ and define the auxiliary problem for the unique radiating solution $w\in H_{\text{loc}}^2(\mathbb R^2)$ satisfying
\begin{align}\label{biharmonic_bvp}
    \begin{dcases}
        (\Delta^2-n(x)\kappa^4)w=\kappa^4(n(x)-1)f\quad \text{in }\mathbb R^2,\\
        \jump{w}=\jump{\partial_\nu w}=\jump{\Delta w}=\jump{\partial_{\nu}\Delta w}=0, \quad \text{on } \partial D\\
        \lim_{r\to\infty}\sqrt{r}(\partial_r w-\text{i}\kappa w)= \lim_{r\to\infty}\sqrt{r}(\partial_r \Delta w-\text{i}\kappa \Delta w)=0
    \end{dcases}
\end{align}
where again $r=|x|$. By linearity of the direct scattering problem, we have that $Fg$ is the far--field pattern of $w$ solution of (\ref{biharmonic_bvp}) with $f=v_g$ in $D$ by the superposition principle, where
\begin{align*}
    v_g(x)&\coloneqq \int_{\mathbb S^1}\text{e}^{\text{i}\kappa {x}\cdot d}g(d)\,\text{d}s(d),\quad g\in L^2(\mathbb S^1),\quad \text{for any} \,\, x\in \mathbb R^2
\end{align*}
is the Herglotz wave function. Now consider the (compact) Herglotz operator $H: L^2(\mathbb S^1)\to L^2(D)$ defined by 
\begin{align}\label{herglotz_op}
    Hg&\coloneqq v_g|_{D},
\end{align}
and associated source-to-far--field operator $G: L^2(D)\to L^2(\mathbb S^1)$ defined by 
\begin{align*}
    Gf&\coloneqq w^{\infty},
\end{align*}
where $w^{\infty}$ is the far--field pattern of $w\in H_{\text{loc}}^2(\mathbb R^2)$ solution of (\ref{biharmonic_bvp}). Then, by the linearity of the auxiliary problem \eqref{biharmonic_bvp} along with the superposition principle, we obtain that 
\[F=GH.\]
We now establish a factorization of the far--field operator for the factorization method. 

To proceed, for the case under consideration, 
we have the integral identity Lippmann--Schwinger--type representation (\ref{LSsf}) applied to the auxiliary field $w$, implies that 
\begin{align}\label{scattered_field_integral}
    w(x)&=\kappa^4\int_D \left(n(y)-1\right)G(x,y)\left(w(y)+f(y)\right)\,\text{d}y.
\end{align}
Using the fact that 
$$G(x,y)=\gamma \frac{\text{e}^{\text{i}\kappa r}}{\sqrt{r}} \cdot \frac{1}{2 \kappa^2} \text{e}^{-\text{i}\kappa \hat{x}\cdot y}+O\left(\frac{1}{r^{3/2}}\right),\quad \text{as } r=|x|\to\infty$$ 
we can derive an integral representation of the far--field pattern. 
Therefore, we have that the far--field pattern of $w$ has the following expression 
\[
w^{\infty}(\hat{x})=\frac{\kappa^2}{2}\int_{D} \text{e}^{-\text{i}\kappa \hat{x}\cdot y}\left(n(y)-1\right) \left(f(y)+w(y)\right)\,\text{d}y.
\]
Therefore, one simply has $G=H^{*}T$ where $H^{*}: L^2(D)\to L^2(\mathbb S^1)$ is the adjoint of $H$ given by 
\begin{align}
    H^{*}\varphi(\hat{x})\coloneqq \int_{D}\text{e}^{-\text{i}\kappa \hat{x}\cdot y}\varphi(y)\,\text{d}y,\quad \varphi\in L^2(D),\quad \hat{x}\in \mathbb S^1,
\end{align}
and where $T: L^2(D)\to L^2(D)$ is defined by
\begin{align}\label{T}
    Tf&\coloneqq \frac{\kappa^2}{2}(n-1)(f+w),
\end{align}
with auxiliary field $w\in H_{\text{loc}}^2(\mathbb R^2)$ being the unique solution to (\ref{biharmonic_bvp}). 

Due to the well--posedness estimate in \cite{CejaAyalaHarrisSanchezVizuet2025} for the auxiliary scattering problem (\ref{biharmonic_bvp}) it is clear that $T$ is a well defined bounded linear operator. From this we see just as in the acoustic scattering case $G$ is compact by the compactness of the Herglotz operator $H$ and the decomposition $G=H^{*}T$. Thus, we obtain the following factorization for the far--field operator $F$.
\begin{Lemma}
    The far-field operator $F: L^2(\mathbb S^1)\to L^2(\mathbb S^1)$ for the scattering problem (\ref{eqn1})-(\ref{SRC}) has the factorization 
    \begin{align}\label{factorization}
    F&=H^{*}TH.
\end{align}
\end{Lemma}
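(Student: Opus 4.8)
The plan is to assemble the factorization \eqref{factorization} by combining the two structural identities already established in the text: the factorization $F = GH$ of the far--field operator through the Herglotz operator $H$ and the source--to--far--field operator $G$, and the decomposition $G = H^{*}T$ of the latter. Granting these, the equality $F = H^{*}TH$ follows immediately by substitution, so the real content of the proof is to justify the two ingredients rigorously rather than heuristically.

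First I would verify $F = GH$. The key point is linearity together with the superposition principle for the direct problem \eqref{eqn1}--\eqref{SRC}: for $g \in L^2(\mathbb S^1)$, the function $v_g$ restricted to $D$ is exactly the volume source $f = v_g|_D = Hg$ that drives the auxiliary problem \eqref{biharmonic_bvp}, because $v_g$ solves the homogeneous biharmonic `Helmholtz' equation $(\Delta^2 - \kappa^4)v_g = 0$ in $\mathbb R^2$ (being a superposition of plane waves $\text{e}^{\text{i}\kappa x\cdot d}$, each of which does). Hence the scattered field produced by the incident Herglotz field $v_g$ coincides with the radiating solution $w$ of \eqref{biharmonic_bvp} with $f = Hg$, and passing to far--field patterns gives $Fg = w^{\infty} = G(Hg)$. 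One should note that uniqueness of $w$ — guaranteed by the absorbing assumption \eqref{absorbing_cond} via \cite{CejaAyalaHarrisSanchezVizuet2025} — is what makes $G$ well defined, and that $F$, $G$, $H$ are all bounded (indeed $H$ and $G$ compact), so the composition is legitimate.

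Next I would establish $G = H^{*}T$. Starting from the Lippmann--Schwinger--type representation \eqref{scattered_field_integral} for the auxiliary field $w$, I would insert the large--$|x|$ asymptotics of the biharmonic fundamental solution, namely $G(x,y) = \gamma\,\dfrac{\text{e}^{\text{i}\kappa r}}{\sqrt{r}}\cdot\dfrac{1}{2\kappa^2}\text{e}^{-\text{i}\kappa\hat{x}\cdot y} + O(r^{-3/2})$, which is exactly the statement recorded just before Lemma~2.1 and itself rests on the exponential decay of the evanescent component $\Phi_{\text{M}}$ and the known far--field of $\Phi_{\text{H}}$. Matching against the radiating asymptotic form of $w^s$ then identifies
\[
w^{\infty}(\hat{x}) = \frac{\kappa^2}{2}\int_D \text{e}^{-\text{i}\kappa\hat{x}\cdot y}\,(n(y)-1)\,(f(y)+w(y))\,\text{d}y = \big(H^{*}(Tf)\big)(\hat{x}),
\]
since $H^{*}$ is precisely the operator $\varphi \mapsto \int_D \text{e}^{-\text{i}\kappa\hat{x}\cdot y}\varphi(y)\,\text{d}y$ and $Tf = \tfrac{\kappa^2}{2}(n-1)(f+w)$. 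The boundedness of $T$ comes from the well--posedness estimate for \eqref{biharmonic_bvp} (the map $f \mapsto w$ is bounded $L^2(D) \to H^2_{\mathrm{loc}}$, in particular $L^2(D) \to L^2(D)$), so $G = H^{*}T$ is a genuine factorization into bounded operators, and compactness of $G$ follows from that of $H$ (hence $H^{*}$).

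Finally, combining the two identities yields $F = GH = H^{*}TH$, which is \eqref{factorization}. The main obstacle is not the algebra but making sure the interchange of the far--field limit with the volume integral in \eqref{scattered_field_integral} is justified uniformly in $y \in D$ and $\hat{x}\in\mathbb S^1$ — i.e., that the $O(r^{-3/2})$ remainder in the fundamental solution's asymptotics is uniform for $y$ in the compact set $\overline D$, so that it contributes nothing to $w^{\infty}$; this is where one leans most heavily on the mapping properties of the biharmonic volume potential and the decomposition $u^s = u_{\text{H}} + u_{\text{M}}$ with exponentially small evanescent part established earlier. Everything else is a direct transcription of the acoustic argument into the biharmonic setting.
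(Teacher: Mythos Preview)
Your proposal is correct and follows essentially the same route as the paper: first obtain $F=GH$ from the superposition principle applied to Herglotz incident fields, then derive $G=H^{*}T$ by passing to the far--field in the Lippmann--Schwinger representation \eqref{scattered_field_integral} using the asymptotics of $G(x,y;\kappa)$, and finally compose. Your additional remarks on the boundedness of $T$ and the uniformity of the $O(r^{-3/2})$ remainder are appropriate justifications that the paper leaves implicit.
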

Notice that for our biharmonic scattering problem, the operator $H$ is defined just as in the isotropic acoustic scattering case but the operator $T$ is defined similarly to the acoustic case but is analytically different due to it connection to \eqref{biharmonic_bvp}. We also recall the following characterization of all the points in $D$ by the range of the operator $H^{*}$.
\begin{Theorem}\label{range_char_D}
    Let $\overline D=\text{supp}(n-1)$ be open and bounded so that the complement $\mathbb R^2\setminus\overline D$ is connected. For $z\in\mathbb R^2$ define $\phi_z\in L^2(\mathbb S^1)$ by
    \begin{align}\label{test_vector}
        \phi_z(\hat{x})&\coloneqq \mathrm{e}^{-\mathrm{i}\kappa \hat{x}\cdot z},\quad \hat{x}\in \mathbb S^1.
    \end{align}
    Then we have
    \begin{align*} 
    z\in D \quad\text{ if and only if}\quad \phi_z\in \text{Range} (H^{*}).
    \end{align*}
\end{Theorem}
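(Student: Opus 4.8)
The plan is to prove the two implications separately, following the classical approach for the acoustic range characterization of $H^*$ (cf. \cite{kirsch2008factorization}), but keeping track of the biharmonic structure through the splitting $u^s = u_{\text{H}} + u_{\text{M}}$ and the fact that the far-field data sees only the propagative part. First I would set up the natural candidate: for $z \in D$, the function $x \mapsto G^\infty(\hat x, z;\kappa) = \frac{1}{2\kappa^2}\mathrm{e}^{-\mathrm{i}\kappa\hat x\cdot z}$ is a scalar multiple of $\phi_z$, and $G(\cdot, z;\kappa)$ is a radiating biharmonic solution away from $z$. So for the ``if'' direction I would argue by contradiction: suppose $z \notin D$ but $\phi_z = H^*\varphi$ for some $\varphi \in L^2(D)$. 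Then the function
\[
v(x) \coloneqq \int_D \mathrm{e}^{-\mathrm{i}\kappa x\cdot y}\varphi(y)\,\mathrm{d}y
\]
is an entire solution of $(\Delta^2 - \kappa^4)v = 0$ in $\mathbb{R}^2$ whose Helmholtz far-field pattern equals $2\kappa^2\, G^\infty(\hat x,z;\kappa)$ (up to the fixed constant $\gamma$), i.e. the far-field pattern of the radiating biharmonic fundamental solution $G(\cdot,z;\kappa)$. Using the decomposition $G(\cdot,z;\kappa) = \frac{1}{2\kappa^2}[\Phi_{\text{H}}(\cdot,z;\kappa) - \Phi_{\text{M}}(\cdot,z;\kappa)]$, the propagative part $\Phi_{\text{H}}(\cdot,z;\kappa)$ is a radiating Helmholtz solution with the same far-field pattern as $v$; by Rellich's lemma and unique continuation for the Helmholtz equation, $v = \Phi_{\text{H}}(\cdot,z;\kappa)$ in $\mathbb{R}^2\setminus(\overline D \cup \{z\})$, which is connected since $z\notin\overline D$ and $\mathbb{R}^2\setminus\overline D$ is connected. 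But the left side is smooth across $z$ while the right side has the logarithmic Hankel singularity at $z$, a contradiction. Hence $z \in D$.

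For the ``only if'' direction, given $z \in D$ I must produce $\varphi \in L^2(D)$ with $H^*\varphi = \phi_z$, i.e. an $L^2(D)$ density whose (Helmholtz-type) ``far-field transform'' $\int_D \mathrm{e}^{-\mathrm{i}\kappa\hat x\cdot y}\varphi(y)\,\mathrm{d}y$ equals $\mathrm{e}^{-\mathrm{i}\kappa\hat x\cdot z}$. The standard device is to choose a slightly smaller domain $B$ (e.g. a small ball around $z$) with $\overline B \subset D$, $z \in B$, and $\mathbb{R}^2\setminus\overline B$ connected, and to solve an interior/transmission problem on $B$ so that $G(\cdot, z;\kappa)|_{\mathbb{R}^2\setminus\overline B}$ is matched by a field generated by a volume source supported in $D\setminus B$ (or by a single/double layer potential on $\partial B$ that can then be re-expressed as an $L^2(D)$ volume density via a mollification/extension argument). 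Concretely, I would look for a radiating biharmonic field with the same Cauchy data $(w, \partial_\nu w, \Delta w, \partial_\nu\Delta w)$ as $G(\cdot,z;\kappa)$ on $\partial B$ that is regular inside $B$; its far-field pattern is then $\frac{1}{2\kappa^2}\mathrm{e}^{-\mathrm{i}\kappa\hat x\cdot z} = \gamma^{-1}$ times a constant multiple of $\phi_z$, and writing this field as the volume potential $\kappa^4\int_D(n-1)G(\cdot,y)(\cdots)$—equivalently realizing it through the operator $H^*$ applied to an explicit $L^2$ density built from the solution of an auxiliary biharmonic boundary value problem on $B$—gives the required $\varphi$. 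The well-posedness of such auxiliary biharmonic problems with the transmission conditions \eqref{eqn2} is exactly what \cite{CejaAyalaHarrisSanchezVizuet2025} provides, so the density exists in $L^2(D)$.

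The main obstacle I anticipate is the ``only if'' construction: in the acoustic case one uses that a Herglotz/entire Helmholtz field can approximate, and ultimately a layer potential on $\partial B$ realizes, the singular fundamental solution outside $B$; here one must check that the analogous biharmonic potentials generate exactly $G(\cdot,z;\kappa)$ up to the evanescent part that is invisible at infinity, and that the resulting object lies in $\text{Range}(H^*)$ rather than merely its closure. The key technical points are (i) that the far-field pattern of the biharmonic fundamental solution coincides with that of its propagative Helmholtz component, so the whole argument can be transported to the Helmholtz setting where Rellich, unique continuation, and the standard range argument apply verbatim, and (ii) that the biharmonic interior problem on $B$ with prescribed Cauchy data of $G(\cdot,z;\kappa)$ is solvable—again guaranteed by the Fredholm-plus-uniqueness theory of \cite{CejaAyalaHarrisSanchezVizuet2025}. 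Once these two facts are in place, both implications follow by the established acoustic-scattering blueprint.
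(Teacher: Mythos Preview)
The paper does not prove this theorem at all: it simply cites Theorem 4.6 of \cite{kirsch2008factorization}. The reason is that the operators $H$ and $H^{*}$ here are \emph{identical} to the acoustic ones---they involve only the plane waves $\mathrm{e}^{\mathrm{i}\kappa x\cdot d}$ and have no biharmonic content whatsoever. All of the biharmonic structure in the factorization $F=H^{*}TH$ lives in the middle operator $T$. So your attempt to ``keep track of the biharmonic structure through the splitting $u^s=u_{\text{H}}+u_{\text{M}}$'' is misdirected: the statement is a pure Helmholtz fact and the biharmonic machinery (the fundamental solution $G$, the auxiliary problem \eqref{biharmonic_bvp}, the well-posedness theory of \cite{CejaAyalaHarrisSanchezVizuet2025}) is irrelevant to it.

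More seriously, your ``if'' direction contains an actual error. The function you introduce,
\[
v(x)=\int_D \mathrm{e}^{-\mathrm{i}\kappa x\cdot y}\varphi(y)\,\mathrm{d}y,
\]
is \emph{not} a solution of $(\Delta^2-\kappa^4)v=0$, nor of $(\Delta+\kappa^2)v=0$: the integrand $\mathrm{e}^{-\mathrm{i}\kappa x\cdot y}$ satisfies $\Delta_x\,\mathrm{e}^{-\mathrm{i}\kappa x\cdot y}=-\kappa^2|y|^2\mathrm{e}^{-\mathrm{i}\kappa x\cdot y}$, and since $y$ ranges over $D$ (not $\mathbb S^1$) the wave numbers vary and no constant-coefficient PDE is satisfied. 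Consequently $v$ has no meaningful ``far-field pattern'' and your Rellich argument cannot start. The correct object is the Helmholtz volume potential
\[
V\varphi(x)=\int_D \Phi_{\text{H}}(x,y;\kappa)\,\varphi(y)\,\mathrm{d}y,
\]
which is radiating, solves $(\Delta+\kappa^2)V\varphi=0$ in $\mathbb R^2\setminus\overline D$, and has far-field pattern $H^{*}\varphi$; one then compares $V\varphi$ with $\Phi_{\text{H}}(\cdot,z;\kappa)$ via Rellich. For the ``only if'' direction, no biharmonic boundary value problem is needed: given $z\in D$, take a cutoff $\chi\in C_0^\infty(D)$ with $\chi\equiv1$ near $z$, set $\varphi=-(\Delta+\kappa^2)\big[(1-\chi)\Phi_{\text{H}}(\cdot,z;\kappa)\big]\in C_0^\infty(D)\subset L^2(D)$, and check directly that $V\varphi=(1-\chi)\Phi_{\text{H}}(\cdot,z;\kappa)$, whence $H^{*}\varphi=\phi_z$. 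This is exactly the acoustic argument in \cite{kirsch2008factorization}, and nothing more is required.
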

The proof of Theorem \ref{range_char_D} can be found in \cite{kirsch2008factorization} Theorem 4.6. To avoid repetition, we will not provide the proof here. With this we only need to study the analytical properties of the operator $T$ defined by \eqref{T} to establish that factorization method. 

\subsection{Analysis of the operator $T$}
Now, we analyze the properties of the middle operator $T$ defined (\ref{T}) to show that the factorization method is applicable to our scattering problem. The operator $T$ is defined similarly to the operator used in the factorization of the far--field operator for the acoustic scattering problem see for e.g. \cite{Kirsch2002}. Provided the material of the support $D$ is absorbing, we study the imaginary part of the operator $T$. To this end, we define 
$\Im{(T)}$ by
\[
\Im{(T)}\coloneqq \frac{1}{2\text{i}}(T-T^{*}),
\]
where $T^{*}$ is the adjoint of $T$ in $L^2(D)$. Following \cite{Kirsch2002}, we can prove that $\Im{(T)}$ is a coercive operator due to the absorbing scatterer assumption. This is done by appealing to (\ref{biharmonic_bvp}) as well as the integration--by--parts formulas  given in \eqref{biharmonic_Green_1}--\eqref{biharmonic_Green_2}.

\begin{Theorem}\label{imag_T_coercive_thm}
    Assume that $\Im{(n)}$ is uniformly positive in $D$. Then the operator $\Im{(T)}: L^2(D)\to L^2(D)$ is coercive. 
\end{Theorem}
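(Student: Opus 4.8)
The plan is to establish coercivity by computing $\langle \Im(T) f, f\rangle_{L^2(D)}$ directly and showing it is bounded below by $c\|f\|_{L^2(D)}^2$ for some $c>0$. First I would unpack the definition: for $f\in L^2(D)$, writing $g\coloneqq f+w$ where $w$ solves \eqref{biharmonic_bvp} with source $f$, we have $Tf=\frac{\kappa^2}{2}(n-1)g$, so $\langle Tf,f\rangle = \frac{\kappa^2}{2}\int_D (n-1)\,g\,\overline{f}\,\mathrm{d}x = \frac{\kappa^2}{2}\int_D (n-1)\,g\,\overline{g}\,\mathrm{d}x - \frac{\kappa^2}{2}\int_D (n-1)\,g\,\overline{w}\,\mathrm{d}x$. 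Taking imaginary parts, the first term immediately produces $\frac{\kappa^2}{2}\int_D \Im(n)\,|g|^2\,\mathrm{d}x$, which by the absorbing assumption dominates $\frac{\kappa^2\alpha}{2}\|g\|_{L^2(D)}^2$. The task is then to control the remaining term $-\frac{\kappa^2}{2}\Im\int_D (n-1)\,g\,\overline{w}\,\mathrm{d}x$ using the PDE, and to convert a lower bound in terms of $\|g\|_{L^2(D)}$ into one in terms of $\|f\|_{L^2(D)}$.

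For the second term, note from \eqref{biharmonic_bvp} that $\kappa^4(n-1)(f+w) = \kappa^4(n-1)g = \Delta^2 w - n\kappa^4 w$, hence $\kappa^2(n-1)g = \frac{1}{\kappa^2}(\Delta^2 w - n\kappa^4 w)$, and so
\begin{align*}
-\frac{\kappa^2}{2}\int_D (n-1)\,g\,\overline{w}\,\mathrm{d}x
&= -\frac{1}{2\kappa^2}\int_D (\Delta^2 w)\,\overline{w}\,\mathrm{d}x + \frac{\kappa^2}{2}\int_D n\,|w|^2\,\mathrm{d}x.
\end{align*}
Taking imaginary parts kills the last integral's real-coefficient part except for $\frac{\kappa^2}{2}\int_D \Im(n)|w|^2$, which is nonnegative and helps us. For the term $\Im\int_D (\Delta^2 w)\overline{w}$, I would apply the biharmonic Green's identity \eqref{biharmonic_Green_1} with $v=\overline{w}$ on $D$ and also on $B_R\setminus\overline D$, use the transmission conditions in \eqref{biharmonic_bvp} to cancel the interface boundary terms, let $R\to\infty$, and invoke the Sommerfeld-type radiation conditions \eqref{SRC}. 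As in the standard Rellich/energy argument, the radiation conditions force the boundary integral over $\partial B_R$ to contribute a term whose imaginary part is $\kappa$ times the squared $L^2(\partial B_R)$-norm of $u_{\mathrm H}$ (the propagative part), which is nonnegative in the limit and again works in our favor — so $\Im\int_D (\Delta^2 w)\overline{w}\,\mathrm{d}x \le 0$, i.e. $-\frac{1}{2\kappa^2}\Im\int_D (\Delta^2 w)\overline{w}\,\mathrm{d}x \ge 0$. Assembling everything gives $\langle \Im(T)f,f\rangle \ge \frac{\kappa^2\alpha}{2}\|g\|_{L^2(D)}^2 \ge \frac{\kappa^2\alpha}{2}\|f+w\|_{L^2(D)}^2$.

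Finally I would upgrade this to coercivity in $\|f\|_{L^2(D)}$ itself. The point is that $w$ depends linearly and boundedly on $f$ via the solution operator of \eqref{biharmonic_bvp} — call it $S$, so $w=Sf$ with $\|Sf\|_{H^2_{\mathrm{loc}}}\le C\|f\|_{L^2(D)}$ by the well-posedness estimate from \cite{CejaAyalaHarrisSanchezVizuet2025}. Suppose, for contradiction, that coercivity fails: then there is a sequence $f_j$ with $\|f_j\|_{L^2(D)}=1$ and $\langle\Im(T)f_j,f_j\rangle\to 0$, hence $\|f_j+Sf_j\|_{L^2(D)}\to 0$. Passing to a weakly convergent subsequence $f_j\rightharpoonup f$ in $L^2(D)$, the operator $S$ restricted to $L^2(D)$ is compact (it factors through $H^2_{\mathrm{loc}}$ by a Rellich embedding, exactly as the Lippmann–Schwinger volume-potential operator is compact), so $Sf_j\to Sf$ strongly; therefore $f_j\to -Sf$ strongly, which identifies $f=-Sf$ and forces $\|f\|_{L^2(D)}=1$, so $f\neq 0$. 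But $f+Sf=0$ means $f$ is a source producing zero scattered field $w=Sf=-f$ supported in $D$; feeding this back into \eqref{biharmonic_bvp} shows $w$ solves a homogeneous radiating biharmonic problem, and the uniqueness for \eqref{biharmonic_bvp} (again from \cite{CejaAyalaHarrisSanchezVizuet2025}, where the absorbing assumption is exactly what is used) yields $w\equiv 0$, hence $f=0$, a contradiction. The main obstacle I anticipate is the careful bookkeeping of boundary terms when applying \eqref{biharmonic_Green_1} across the interface $\partial D$ and on $\partial B_R$: one must verify that all four transmission conditions are needed to cancel the $\partial D$ contributions and that the $\partial B_R$ limit genuinely yields a sign-favorable (nonnegative imaginary part) term from the radiation conditions on both $u^s$ and $\Delta u^s$ — this is where the biharmonic structure differs from the acoustic case and where the argument is most delicate.
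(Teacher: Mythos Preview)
Your plan is essentially the paper's own argument: derive $\Im(Tf,f)\ge\frac{\kappa^2}{2}\int_D\Im(n)\,|f+w|^2\,\mathrm{d}x$ by substituting the PDE, pushing the remaining integral out to $\partial B_R$, and using the radiation structure, then close with a compactness/contradiction argument on a normalized sequence. Two points need fixing.

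First, a slip in the PDE identity: from \eqref{biharmonic_bvp} one has $\Delta^2 w-n\kappa^4 w=\kappa^4(n-1)f$, not $\kappa^4(n-1)(f+w)$. The correct substitution is $\kappa^4(n-1)(f+w)=\Delta^2 w-\kappa^4 w$ (add $(n-1)\kappa^4 w$ to both sides), which turns your $\frac{\kappa^2}{2}\int_D n|w|^2$ into the real term $\frac{\kappa^2}{2}\int_D |w|^2$. The inequality survives since you only gained a spurious nonnegative contribution, but the displayed identity is wrong. Incidentally, with the correct version the integrand $\overline w(\Delta^2 w-\kappa^4 w)$ vanishes on $B_R\setminus\overline D$, so the $D$-integral extends to $B_R$ for free and no transmission-condition bookkeeping on $\partial D$ is needed at all---this is how the paper handles it.

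Second, and more substantive: the final uniqueness step cannot be referred to the well-posedness of \eqref{biharmonic_bvp}. Once $f+w=0$, the limiting $w$ is an \emph{entire} radiating solution of $(\Delta^2-\kappa^4)w=0$, i.e.\ effectively $n\equiv 1$, and the result you cite from \cite{CejaAyalaHarrisSanchezVizuet2025} requires $\Im(n)\ge\alpha>0$ in $D$---the absorbing assumption is precisely \emph{not} available here. The paper closes this gap directly: decompose $w=w_{\mathrm H}+w_{\mathrm M}$ as in \eqref{aux_components}; then $w_{\mathrm H}$ is an entire radiating Helmholtz solution, so Rellich's lemma gives $w_{\mathrm H}\equiv 0$, while for $w_{\mathrm M}$ the energy identity $\int_{B_R}(|\nabla w_{\mathrm M}|^2+\kappa^2|w_{\mathrm M}|^2)\,\mathrm{d}x=\int_{\partial B_R}\overline{w_{\mathrm M}}\,\partial_r w_{\mathrm M}\,\mathrm{d}s\to 0$ (exponential decay) forces $w_{\mathrm M}\equiv 0$. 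You need this short argument in place of the citation.
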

\begin{proof}
    We start by deriving a useful identity related to the imaginary part of $T$ using the direct scattering problem. To this end, for simplicity we let $(\,,\,)$ denoting the $L^2(D)$ scalar product. Now, we let  $f\in L^2(D)$ and $w\in H_{\text{loc}}^2(\mathbb R^2)$ be the corresponding solution to (\ref{biharmonic_bvp}). By definition of the operator $T$ in \eqref{T} 
    \begin{align}\label{Tscalar_prod}
        (Tf,f)&= \frac{\kappa^2}{2}\int_{D}(n-1)(w+f)\overline f\,\text{d}x.
    \end{align}
    Noting that $\overline f=\overline{f+w}-\overline w$, we can rewrite the scalar product
    \[
    (Tf,f)=\frac{\kappa^2}{2}\int_D (n-1)|w+f|^2\,\text{d}x-\frac{\kappa^2}{2}\int_D (n-1)(w+f)\overline w\,\text{d}x.
    \]
    
    Recall, that by the auxiliary scattering problem \eqref{biharmonic_bvp} we have that 
    \begin{align*}
    \Delta^2w-\kappa^4w&=\Delta^2w-\kappa^4nw+\kappa^4nw-\kappa^4w\\
    &=\kappa^4(n-1)f+\kappa^4(n-1)w\\
    &=\kappa^4(n-1)(f+w).
    \end{align*}
    Therefore, we obtain
    \begin{align*}
        (Tf,f)&=\frac{\kappa^2}{2}\int_D (n-1)|w+f|^2\,\text{d}x-\frac{1}{2\kappa^2}\int_D\overline{w}(\Delta^2w-\kappa^4w)\,\text{d}x.
    \end{align*}
    Since $n=1$ outside $D$, it is clear that 
    \[
    \Delta^2w-\kappa^4w=0 \quad \text{in }B_R\setminus\overline D.
    \]
    with $B_R$ a ball of radius $R$ containing $D$.
    Thus, by integrating by parts over $B_R$ using the integral identity in (\ref{biharmonic_Green_1}), we obtain
    \begin{align*}
    (Tf,f)&=\frac{\kappa^2}{2}\int_D (n-1)|w+f|^2\,\text{d}x-\frac{1}{2\kappa^2}\int_{B_R}\overline{w}(\Delta^2w-\kappa^4w)\,\text{d}x\\
    &=\frac{\kappa^2}{2}\int_D (n-1)|w+f|^2\,\text{d}x-\frac{1}{2\kappa^2}\left[ \int_{B_R} |\Delta w|^2-\kappa^4|w|^2\,\text{d}x+\int_{\partial B_R} \overline w\partial_{r} \Delta w-\partial_r\overline w\Delta w\,\text{d}s \right].
    \end{align*}
    
    In order to proceed, we need to study the boundary integral so we define
    \begin{align*}
        I(R)&\coloneqq \int_{\partial B_R}\overline w\partial_r\Delta w-\partial_r\overline w\Delta w\,\text{d}s.
    \end{align*}
    Because the radiating solution $w$ has the decomposition $w=w_{\text{H}}+w_{\text{M}}$ and $\Delta w=-\kappa^2(w_{\text{H}}-w_{\text{M}})$ with their asymptotic behaviors
    \begin{align*}
        \quad |\partial_r w_{\text{H}}- \text{i} \kappa w_{\text{H}}|=O(r^{-3/2})
        \quad \text{along with}\quad |w_{\text{M}}|=O(r^{-1/2}\text{e}^{-\kappa r}), \quad \text{and} \,\, |\partial_r w_{\text{M}}|=O(r^{-1/2}\text{e}^{-\kappa r})
    \end{align*}
   as $r\to\infty$ uniformly with respect to $\hat{x}\in \mathbb S^1$, we obtain
    \begin{align*}
        I(R)&= \int_{\partial B_R}\overline w\partial_r\Delta w-\partial_r\overline w\Delta w\,\text{d}s\\
        &=-\kappa^2\int_{\partial B_R}(w_{\text{H}}\partial_r\overline w_{\text{H}}-\overline w_{\text{H}}\partial_r w_{\text{H}})\,\text{d}s + o(1)\\
        &=-2\text{i} \kappa^3|\gamma|^2\int_{\mathbb S^1}|w_{\text{H}}^{\infty}(\hat{x})|^2\,\text{d}s(\hat{x}) + o(1)
    \end{align*}
    as $R\to\infty$. Hence, we obtain the identity
    \[
    (Tf,f)=\frac{\kappa^2}{2}\int_D (n-1)|w+f|^2\,\text{d}x-\frac{1}{2\kappa^2}\int_{B_R}(\Delta w|^2-\kappa^4|w|^2)\,\text{d}x+\text{i} \kappa |\gamma|^2\int_{\mathbb S^1}|w_{\text{H}}^{\infty}(\hat{x})|^2\,\text{d}s(\hat{x}).
    \]
    up to leading order as $R\to\infty$. Taking the imaginary part yields
    \begin{align}\label{imag_T}
    \Im{(Tf,f)}=\frac{\kappa^2}{2}\int_D \Im{(n)}|w+f|^2\,\text{d}x+\kappa |\gamma|^2\int_{\mathbb S^1}|w_{\text{H}}^{\infty}(\hat{x})|^2\,\text{d}s(\hat{x}).
    \end{align}
    We are now in the position to prove the coercivity property. 
    
    To prove coercivity, we proceed by way of contradiction. Therefore, assume that there existence of a sequence $f_{\ell}\in L^2(D)$ such that
    \begin{align*}
        \|f_{\ell}\|_{L^2(D)}=1 \quad \text{for all $\ell \in \mathbb{N}$} \quad \text{and } \quad \Im{(Tf_{\ell},f_{\ell})}\to 0 \quad \text{as $\ell \to \infty$}.
    \end{align*}
   We denote by $w_{\ell}\in H_{\text{loc}}^2(\mathbb R^2)$ solution of (\ref{biharmonic_bvp}) with $f=f_{\ell}$. Then up to changing the initial sequence, we can assume that $f_{\ell}$ weakly converges to some $f\in L^2(D)$ and $w_{\ell}$ converges weakly in $H_{\text{loc}}^2(\mathbb R^2)$ and strongly in $L^2(D)$ to some $w\in H_{\text{loc}}^2(\mathbb R^2)$. Since we have assumed that
   $$\Im{(Tf_{\ell}, f_{\ell})} \to 0\quad \text{as $\ell \to \infty$}$$ 
   this forces the sequence $(w_{\ell}+f_{\ell}) \to 0$ strongly in $L^2(D)$ as $\ell \to \infty$. It is clear that $w_{\ell}\to 0$ strongly in $L^2(D)$ as $\ell \to \infty$ by the compact embedding of $H^2(D)$ into $L^2(D)$. Note that the sequence $w_{\ell}$ satisfies
   \[
   \Delta^2w_{\ell}-\kappa^4w_{\ell}=\kappa^4(n-1)(f_{\ell}+w_{\ell})\to 0\quad \text{as }\ell\to\infty.
   \]
   Thus, the limiting function $w$ is a radiating solution to
   \[
   \Delta^2w-\kappa^4w=0\quad\text{in }\mathbb R^2.
   \]
   Substituting the decomposition $w=w_{\text{H}}+w_{\text{M}}$ leads to
   \begin{align*}
       (\Delta+\kappa^2)w_{\text{H}}=0,&\quad \text{and }\quad (\Delta-\kappa^2)w_{\text{M}}=0\quad \text{in }\mathbb R^2,
   \end{align*}
   with $w_{\text{H}}$ and $w_{\text{M}}$ radiating. It is well--known that an entire, radiating solution to the Helmholtz equation is identically zero by Rellich's lemma and the unique continuation property, therefore $w_{\text{H}}\equiv 0$. We multiply the equation $(\Delta-\kappa^2)w_{\text{M}}=0$ by $\overline w_{\text{M}}$ and integrate by parts over a ball $B_R$ of radius $R$ containing $D$ to obtain
   \begin{align*}
       \int_{B_R}\overline w_{\text{M}}(\Delta w_{\text{M}}-\kappa^2w_{\text{M}})\,\text{d}x&=0\\
    \int_{B_R}(|\nabla w_{\text{M}}|^2+\kappa^2|w_{\text{M}}|^2)\,\text{d}x&=\int_{\partial B_R}\frac{\partial w_{\text{M}}}{\partial r}\overline w_{\text{M}}\,\text{d}s.
   \end{align*}
   As $R\to\infty$, the right--hand side of the equation approaches $0$ by exponential decay of $w_{\text{M}}$ and $\partial_r w_{\text{M}}$, thus we obtain
   \[
   \int_{\mathbb R^2}(|\nabla w_{\text{M}}|^2+\kappa^2|w_{\text{M}}|^2)\,\text{d}x=0.
   \]
  This would imply that  
   \[
  \| w_{\text{M}}\|_{H^1(\mathbb R^2)}^2=0\quad \text{in }\mathbb R^2.
  \]
   Thus, $ w_{\text{M}}\equiv0$ in $\mathbb R^2$, which implying $w_{\ell}\to 0$ in $L^2(D)$. Therefore, we can obtain $f_{\ell}\to 0$ in $L^2(D)$, contradicting $\| f_{\ell}\|_{L^2(D)}=1$, this proves the claim.
   \end{proof}
We now formulate and prove the main range test identity for the factorization method extended to the biharmonic scattering of an absorbing penetrable medium. We recall the definition of the self-adjoint compact operator 
\[
\Im{(F)}\coloneqq \frac{1}{2\text{i} }(F-F^{*}),
\]
where $F^{*}$ is the $L^2$--adjoint of the far--field operator $F$. Notice, that since $\Im{(T)}$ is coercive we see that the operator $\Im{(F)}$ is positive. This would then give that it has a square root via its spectral decomposition. The following result gives that the scatterer can be characterized by a range test utilizing the known operator $\Im{(F)}$ or more specifically its square root.

\begin{Theorem}\label{Range_Test_F}
    Assume that $\Im{(n)}$ is uniformly positive in $D$. Furthermore, let $\phi_z\in L^2(\mathbb S^1)$ be defined by (\ref{test_vector}).
    Then 
    \begin{center}
    $z\in D$ if and only if $\phi_z\in \text{Range}\big(\Im{(F)}^{1/2}\big)$.
    \end{center}
\end{Theorem}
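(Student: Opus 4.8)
The plan is to deduce the result from the abstract range identity at the core of the factorization method, combining the factorization $F = H^{*}TH$ established above with the coercivity of $\Im(T)$ from Theorem~\ref{imag_T_coercive_thm} and the range characterization of $H^{*}$ in Theorem~\ref{range_char_D}.

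First I would compute the imaginary part of the far--field operator. Since $F = H^{*}TH$ and hence $F^{*} = H^{*}T^{*}H$, one has directly
\[
\Im(F) = \frac{1}{2\mathrm{i}}\bigl(F - F^{*}\bigr) = H^{*}\,\Im(T)\,H .
\]
From this identity it follows that $\Im(F)$ is self--adjoint and compact on $L^{2}(\mathbb S^{1})$ (compactness being inherited from that of $H$, equivalently of $F$), and that for every $g \in L^{2}(\mathbb S^{1})$
\[
\bigl(\Im(F)g,\,g\bigr)_{L^{2}(\mathbb S^{1})} = \bigl(\Im(T)Hg,\,Hg\bigr)_{L^{2}(D)} \;\ge\; c\,\|Hg\|_{L^{2}(D)}^{2} \;\ge\; 0
\]
by Theorem~\ref{imag_T_coercive_thm}. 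Thus $\Im(F)$ is a positive, self--adjoint, compact operator and its square root $\Im(F)^{1/2}$ is well defined.

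Next I would invoke the abstract range lemma. Setting $A := H^{*}\colon L^{2}(D)\to L^{2}(\mathbb S^{1})$ and $S := \Im(T)\colon L^{2}(D)\to L^{2}(D)$, the identity above reads $\Im(F) = A\,S\,A^{*}$ with $S$ bounded, self--adjoint and coercive. The standard range identity of the factorization method (see \cite{kirsch2008factorization}) then gives $\text{Range}\bigl((A S A^{*})^{1/2}\bigr) = \text{Range}(A)$, that is,
\[
\text{Range}\bigl(\Im(F)^{1/2}\bigr) = \text{Range}(H^{*}).
\]
For completeness one proves this by Picard's criterion: a vector $y$ lies in the range of $B^{1/2}$ for a positive compact $B$ precisely when $|(y,x)|^{2}\le c\,(Bx,x)$ for all $x$ and some $c>0$; applying this with $B = ASA^{*}$ and using the two--sided bound $c_{1}\|A^{*}x\|^{2}\le (S A^{*}x,\, A^{*}x)\le c_{2}\|A^{*}x\|^{2}$ shows that the ranges of $(ASA^{*})^{1/2}$ and of $(AA^{*})^{1/2}$ coincide, while $\text{Range}\bigl((AA^{*})^{1/2}\bigr)=\text{Range}(A)$ is the polar--decomposition identity. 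Finally, Theorem~\ref{range_char_D} gives that $z\in D$ if and only if $\phi_{z}\in\text{Range}(H^{*})$, and chaining the two equivalences yields the claim.

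The one genuinely model--specific ingredient is the coercivity of $\Im(T)$, which is exactly Theorem~\ref{imag_T_coercive_thm} and hinges on the absorbing condition $\Im(n)\ge\alpha>0$; with that in hand the argument is the purely functional--analytic machinery of the factorization method and transfers essentially verbatim from the acoustic setting. I expect the only points requiring a little care to be the verification of the hypotheses of the abstract range lemma --- namely that $\Im(F)$ is self--adjoint, compact and positive and that $\Im(T)$ is bounded as well as coercive --- all of which are immediate from the well--posedness bound for \eqref{biharmonic_bvp} and from Theorem~\ref{imag_T_coercive_thm}, so no new obstacle arises beyond what was already handled there.
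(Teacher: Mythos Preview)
Your proposal is correct and follows essentially the same route as the paper: you derive $\Im(F)=H^{*}\Im(T)H$ from the factorization $F=H^{*}TH$, use the coercivity of $\Im(T)$ from Theorem~\ref{imag_T_coercive_thm} to invoke the abstract range identity (the paper cites it as Corollary~1.22 of \cite{kirsch2008factorization}) giving $\text{Range}(\Im(F)^{1/2})=\text{Range}(H^{*})$, and then conclude via Theorem~\ref{range_char_D}. The additional details you supply---positivity and compactness of $\Im(F)$ and the Picard-criterion sketch of the range lemma---are consistent elaborations but not a different approach.
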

\begin{proof}
    Noting that
    \[
    \Im{(Tf,f)}= \left(\Im{(T)}f,f \right)>0
    \]
    and we can conclude that by Theorem \ref{imag_T_coercive_thm} that the self--adjoint operator $\Im{(T)}$ is coercive on $L^2(D)$. Furthermore, the factorization (\ref{factorization}) yields the factorization of $\Im{(F)}$ in the form
    \begin{align}\label{imag_factorization}
        \Im{(F)}&=H^{*}\Im{(T)}H.
    \end{align}
   by direct calculations. Now, by appealing to Corollary 1.22 in \cite{kirsch2008factorization} we have that the ranges of $H^{*}$ and $\Im{(F)}^{1/2}$ coincide. The combination of the characterization of $D$ given in Theorem \ref{range_char_D} implies the assertion.
\end{proof}
We now let $\lambda_{j}\in \mathbb R^{+}$ and $\psi_j\in L^2(\mathbb S^1)$
be the orthonormal eigensystem of the positive, self--adjoint compact operator $\Im{(F)}$. Note that the positivity of the eigenvalues comes from the fact that $\Im{(T)}$ is coercive and $H$ being injective. Then applying Picard's range criterion (see for e.g. Theorem 2.7 of \cite{cakoni2014qualitative}) to Theorem \ref{Range_Test_F} gives the following.
\begin{Corollary}\label{Picard_range_thm}
    Assume that the assumptions of Theorem \ref{Range_Test_F} are satisfied. Let $\phi_z\in L^2(\mathbb S^1)$ be defined by (\ref{test_vector}).  Then for all $z\in \mathbb R^2$,
    \begin{align*}
        \sum_{j=1}^{\infty}\frac{|(\phi_z,\psi_j)_{L^2(\mathbb S^1)}|^2}{\lambda_j}<\infty \quad \text{if and only if}\quad z\in D
    \end{align*}
    where $\lambda_{j}\in \mathbb R^{+}$ and $\psi_j\in L^2(\mathbb S^1)$ is the orthonormal eigensystem of the positive, self-adjoint compact operator $\Im{(F)}$.
\end{Corollary}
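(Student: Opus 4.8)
\section*{Proof proposal for Corollary \ref{Picard_range_thm}}

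The plan is to obtain Corollary \ref{Picard_range_thm} as a direct consequence of Theorem \ref{Range_Test_F} together with an application of Picard's range criterion to the (nonnegative) square root $\Im{(F)}^{1/2}$. Theorem \ref{Range_Test_F} already supplies the geometric equivalence $z\in D \iff \phi_z\in\text{Range}\big(\Im{(F)}^{1/2}\big)$, so the remaining work is purely spectral: translate membership in $\text{Range}\big(\Im{(F)}^{1/2}\big)$ into the stated summability condition on the Fourier coefficients of $\phi_z$ relative to the eigenbasis of $\Im{(F)}$.

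First I would record the spectral setup. The operator $\Im{(F)}=\frac{1}{2\mathrm{i}}(F-F^{*})$ is compact (since $F$ is) and self--adjoint by construction. From the factorization $\Im{(F)}=H^{*}\Im{(T)}H$ in \eqref{imag_factorization}, the coercivity of $\Im{(T)}$ from Theorem \ref{imag_T_coercive_thm}, and the injectivity of the Herglotz operator $H$, one gets $\big(\Im{(F)}g,g\big)=\big(\Im{(T)}Hg,Hg\big)\geq c\,\|Hg\|_{L^2(D)}^{2}>0$ for every $g\neq 0$; hence $\Im{(F)}$ is injective and positive. Consequently its eigenvalues $\lambda_j\in\mathbb R^{+}$ accumulate only at $0$, and the orthonormal eigenfunctions $\psi_j$ form a \emph{complete} orthonormal system of $L^2(\mathbb S^1)$. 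The nonnegative square root $\Im{(F)}^{1/2}$, defined through this spectral decomposition, is then the compact self--adjoint operator with the same eigenfunctions $\psi_j$ and eigenvalues $\sqrt{\lambda_j}$. Next I would invoke Picard's theorem (Theorem 2.7 of \cite{cakoni2014qualitative}) with $A:=\Im{(F)}^{1/2}$: a function $\phi\in L^2(\mathbb S^1)$ lies in $\text{Range}(A)$ if and only if $\sum_{j\geq 1}\frac{|(\phi,\psi_j)_{L^2(\mathbb S^1)}|^2}{(\sqrt{\lambda_j})^2}<\infty$, i.e. precisely $\sum_{j\geq 1}\frac{|(\phi,\psi_j)_{L^2(\mathbb S^1)}|^2}{\lambda_j}<\infty$. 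Taking $\phi=\phi_z$ and combining with Theorem \ref{Range_Test_F} yields the asserted equivalence for all $z\in\mathbb R^2$.

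I do not anticipate a genuine obstacle here, since the analytic content is entirely carried by Theorem \ref{Range_Test_F}. The only point deserving a moment's care is the completeness of the eigensystem $\{\psi_j\}$: if one merely knew $\Im{(F)}\geq 0$, Picard's criterion would have to be supplemented with the requirement that $\phi_z$ be orthogonal to $\ker\Im{(F)}$, but for $z\in D$ this is automatic because $\text{Range}\big(\Im{(F)}^{1/2}\big)\subseteq\overline{\text{Range}\big(\Im{(F)}^{1/2}\big)}=\big(\ker\Im{(F)}\big)^{\perp}$. Deriving the injectivity of $\Im{(F)}$ directly from the factorization, as indicated above, removes even this minor bookkeeping and lets Picard's criterion be applied verbatim.
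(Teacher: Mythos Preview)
Your proposal is correct and follows exactly the route the paper takes: the paper simply states that Corollary~\ref{Picard_range_thm} follows by applying Picard's range criterion (Theorem~2.7 of \cite{cakoni2014qualitative}) to Theorem~\ref{Range_Test_F}, and you have filled in precisely that argument, including the justification of positivity and injectivity of $\Im(F)$ needed for completeness of the eigensystem.
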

From this, we see that Corollary \ref{Picard_range_thm} gives a way to characterize the penetrable absorbing scatterer $D$ from the far--field data. Indeed, we see that from the spectral data of the known operator $\Im{(F)}$ we have that 
\begin{align*}
    W(z)&\coloneqq \left[ \sum_{j=1}^{\infty}\frac{|(\phi_z,\psi_j)_{L^2(\mathbb S^1)}|^2}{\lambda_j}    \right]^{-1}>0 \quad \text{if and only if}\quad z\in D.
\end{align*}
Thus, one can reconstruct $D$ by plotting the function $W(z)$ since we have that
\begin{align*}
    \chi_D(z)&= \text{sign}(W(z))=\begin{dcases}
        1,\quad \text{if }z\in D,\\
        0, \quad \text{if }z\notin D
    \end{dcases}
\end{align*}
is the characteristic function for $D$. Notice, that if we had two scatterers that produced the same far--field data, this would imply that the scatterers will produce the same characteristic function. This would then imply that the scatterers are the same, which gives the following result. 

\begin{Corollary}\label{uniq-fromFM}
    The far--field pattern for all $\hat{x}$ and $d$ in $\mathbb{S}^1$ corresponding to \eqref{eqn1}--\eqref{SRC} uniquely determine that scatterer $D$ provided that  $\Im{(n)}$ is uniformly positive in $D$.
\end{Corollary}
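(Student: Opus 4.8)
The statement to prove is Corollary~\ref{uniq-fromFM}, and the plan is to read it off directly from the reconstruction scheme built in Corollary~\ref{Picard_range_thm}. The essential point is that the map from far--field data to scatterer has already been made explicit: given the far--field pattern $u^\infty(\hat x,d)$ for all $\hat x,d\in\mathbb S^1$, one forms the far--field operator $F$ via \eqref{farfield_op}, then its self--adjoint part $\Im(F)$, extracts the orthonormal eigensystem $\{\lambda_j,\psi_j\}$, and defines the indicator $W(z)$. By Corollary~\ref{Picard_range_thm}, $\mathrm{sign}(W(z))=\chi_D(z)$ for every $z\in\mathbb R^2$. Thus the support $D=\mathrm{supp}(\chi_D)$ is completely determined by the far--field data.

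The argument I would write is a short contradiction/uniqueness argument. Suppose $D_1$ and $D_2$ are two absorbing penetrable scatterers (each with refractive index $n_j$ satisfying $\Im(n_j)\ge\alpha_j>0$ on $D_j$) that produce the same far--field pattern $u^\infty(\hat x,d)$ for all $\hat x,d\in\mathbb S^1$. Then they produce the same far--field operator $F$, hence the same operator $\Im(F)$, hence the same eigensystem $\{\lambda_j,\psi_j\}$ (up to the usual orthonormal--basis ambiguity in eigenspaces, which does not affect the series), hence the same indicator function $W(z)$ for all $z$. Applying Corollary~\ref{Picard_range_thm} to each scatterer separately gives $\chi_{D_1}(z)=\mathrm{sign}(W(z))=\chi_{D_2}(z)$ for all $z\in\mathbb R^2$, so $D_1=D_2$ as sets. (One should note that $D_j=\overline{\mathrm{supp}(n_j-1)}$ is open by hypothesis, so equality of characteristic functions pins down the open set.) This is exactly the assertion.

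There is essentially no analytic obstacle here, since all the work has been done in Theorems~\ref{imag_T_coercive_thm}, \ref{Range_Test_F} and Corollary~\ref{Picard_range_thm}; the only thing to be careful about is the phrasing. In particular, I would make explicit that the conclusion is uniqueness of the \emph{support} $D$ and not of the coefficient $n$ itself---the factorization method is a qualitative method and recovers only the geometry. It may also be worth remarking that the ambiguity in the choice of orthonormal eigenbasis within a (possibly multi--dimensional) eigenspace of $\Im(F)$ is irrelevant, because the quantity $\sum_j |(\phi_z,\psi_j)|^2/\lambda_j$ depends only on the spectral projections $\sum_{\lambda_j=\lambda} |(\phi_z,\psi_j)|^2$ associated to each distinct eigenvalue $\lambda$, which are basis--independent. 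With these remarks the proof is two or three sentences.

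\begin{proof}
Suppose $D_1$ and $D_2$ are two absorbing penetrable scatterers, with refractive indices $n_1$ and $n_2$ satisfying $\Im(n_j)\geq\alpha_j>0$ in $D_j$, that produce identical far--field data $u^{\infty}(\hat x,d)$ for all $\hat x,d\in\mathbb S^1$. Then the associated far--field operators defined by \eqref{farfield_op} coincide, and hence so do the self--adjoint compact operators $\Im{(F)}$. In particular the two problems share the same positive eigenvalues $\lambda_j$ and the same spectral projections; consequently the indicator
\[
W(z)=\left[\sum_{j=1}^{\infty}\frac{|(\phi_z,\psi_j)_{L^2(\mathbb S^1)}|^2}{\lambda_j}\right]^{-1}
\]
(which depends only on the $\lambda_j$ and on the basis--independent quantities $\sum_{\lambda_j=\lambda}|(\phi_z,\psi_j)|^2$) is the same function of $z$ for both scatterers. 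Applying Corollary~\ref{Picard_range_thm} to $D_1$ and to $D_2$ separately yields
\[
\chi_{D_1}(z)=\text{sign}(W(z))=\chi_{D_2}(z)\qquad\text{for all }z\in\mathbb R^2.
\]
Since $D_1=\overline{\text{supp}(n_1-1)}$ and $D_2=\overline{\text{supp}(n_2-1)}$ are open by assumption, equality of their characteristic functions forces $D_1=D_2$. This proves that the far--field pattern uniquely determines the scatterer $D$.
\end{proof}
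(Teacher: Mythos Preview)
Your proof is correct and follows exactly the same route as the paper: identical far--field data give the same far--field operator $F$, hence the same $\Im(F)$ and the same indicator $W(z)$, so Corollary~\ref{Picard_range_thm} forces $\chi_{D_1}=\chi_{D_2}$ and thus $D_1=D_2$. Your additional remarks on the basis--independence of the Picard series and on the recovery of only the support (not $n$) are welcome clarifications but go slightly beyond what the paper spells out.
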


Note that we have uniqueness of the inverse shape problem with only far--field data that does not possess any information about the evanescent component to the scattered field. It is shown in \cite{FF-uniqueness} that the biharmonic far--field pattern is enough to recover a clamped obstacle. This is interesting due to the fact the for the biharmonic case, information about the scattered field is lost (unlike the acoustic case) but we still have uniqueness for the inverse problem. 

In the presence of noisy measurements, the indicator function given in Corollary \ref{Picard_range_thm} can be reformulated as the so--called regularized factorization method 
\[
W_{\alpha}(z)\coloneqq \left[\sum_{j=1}^{\infty} \frac{|\lambda_j|}{\alpha+|\lambda_j|^2} \left|(\phi_z, \psi_j)_{L^2(\mathbb S^1)}\right|^2   \right]^{-1},
 \]
using `Tikhonov' regularization. Notice that in the original imaging function $W(z)$ we divide by the eigenvalues of a compact operator. This is not optimal since the eigenvalues tend to zero rapidly in many applications. Therefore, in \cite{regfm} it is proven that 
$$z \in D \quad \text{if and only if } \quad \liminf\limits_{\alpha \to 0^+}W_{\alpha}(z)>0.$$
This result implies that the regularized imaging function $W_{\alpha}(z)$ can be used to recover the scatterer $D$. The regularization parameter $\alpha$ can be chosen ad--hoc or by a discrepancy principle provide one has noisy far--field data. Note, that the analysis of this regularized factorization method was initially studied for diffuse optical tomography and was motivated by \cite{arens-lechleiter,GLSM,RegFM}.


\section{Numerical Examples}\label{section_4}
We now give some numerical reconstructions of penetrable absorbing cavities $D$ using the indicator function developed in Corollary \ref{Picard_range_thm} in two dimensions. To this end, we assume that the scatterer has a small area. With this assumption, we can exploit the Born approximation for the scattered field to simplify the calculations of the synthetic data. Next we will derive a series expansion for a disk of radius $0<\epsilon\leq 1$ centered at the origin, which we will use as a testing scenario to verify the accuracy of the numerical approximation and show that we are able to obtain accurate approximations of the far--field data for weak scatterers $D$. Finally, we explain the implementation of the factorization method and report the successful reconstruction of a variety of penetrable absorbing cavities from knowledge of the far--field data for different parameter settings.

We provide numerical reconstructions using \texttt{MATLAB}. Recall that the scattered field $u^s({x},d)$ is given by the integral representation (\ref{scattered_field_integral}).
We compute the synthetic far--field data using the Born approximation for weak scatterers $D$ such that $|D|\ll 1$. Therefore, just as in the acoustic case this would seem to imply that 
\[
u^{s}({x},d)\approx {\kappa^4}\int_D (n(y)-1) G(x,y) \text{e}^{\text{i}\kappa y\cdot d}\,\text{d}y.
\]
which gives
\[
u^{\infty}(\hat{x},d)\approx \frac{\kappa^2}{2}\int_D (n(y)-1)\text{e}^{-\text{i}\kappa y\cdot (\hat{x}-d)}\,\text{d}y.
\]
In all our examples, for simplicity we will take a constant refractive index in the scatterer. 
In the numerical implementation, to discretize the problem, we compute $u^{\infty}(\hat{x}_i,d_j)$ using numerical integration at $64$ equally spaced points on the unit circle given by
\[
\hat{x}_i=d_i=(\cos{\theta_i},\sin{\theta_i})\quad \text{with }\quad\theta_i=2\pi(i-1)/64.
\]
This gives the discretized far--field operator given by
\[
\mathbf{F}=\left[u^{\infty}(\hat{x}_i,d_j) \right]_{i,j=1}^{64}
\]
which will be used to recover the penetrable scatterer. Therefore, we have that the imaging functional that discretizes the version of Corollary \ref{Picard_range_thm} that we will plot is given by
\[
W_{\text{FM}}(z)=\left[ \sum_{j=1}^{64}\frac{\phi^2(\sigma_j;\alpha)}{\sigma_j} |(\mathbf{u}_j,\boldsymbol{\ell}_z)|^2  \right]^{-1}\quad \text{with }\quad \boldsymbol{\ell}_z=[\text{e}^{-\text{i}\kappa \hat{x}_i\cdot z}]_{i=1}^{64}.
\]
Here $\sigma_j$ are the singular values and $\mathbf u_j$ are the left singular vectors of
\[
\Im{(\mathbf F)}=\frac{\mathbf F-\mathbf F^{*}}{2\text{i}}
\]
and the filter function $\phi(t;\alpha)$ is given by
\[
\phi(t;\alpha)=\frac{t^2}{t^2+\alpha}
\]
for Tikhonov regularization with a fixed regularization parameter $\alpha=10^{-5}$.

To simulate measurement error in the data, random noise is introduced into the multi-static far-field matrix $\mathbf F$, resulting in the following in the following.
\[
\mathbf F^{\delta}=\left[\mathbf F(i,j)(1+\delta \mathbf{R}(i,j)  \right]_{i,j=1}^{64},
\]
where the error matrix $\mathbf R\in\mathbb C^{64\times 64}$ whose entries initally have real and parts consists of random values within the interval $[-1,1]$, and is then normalized. Therefore, we have that $0<\delta<1$ represents the relative noise level added to the computed data.

\subsection{Evaluation of the Born Approximation Against the Exact Solution}
In this section, we investigate how to approximate the biharmonic far--field pattern arising from scattering by a penetrable, absorbing medium. Our focus will be on the case of small disk scatterers, where we compare the exact far--field with the Born approximation to assess the accuracy of the latter approximation. As a concrete example, we consider small disk scatterers and compare their exact far--field response with that predicted by the Born approximation.

To this end, we consider the separation of variables approach outlined in \cite{CejaAyalaHarrisSanchezVizuet2025} and assume that $D=B_{\epsilon}$ (i.e. small disk scatterers), where $0<\epsilon\leq 1$ is the radius of a small disk and $n$ is given by a constant. This implies that the direct scattering problem (\ref{eqn1})--(\ref{SRC}) for the scattered field $u^s(r,\theta)$ outside the scatterer and total field $u(r,\theta)$ inside the scatterer can be written as
\[
\Delta^2u^s-\kappa^4u^s=0\quad \text{in }\mathbb R^2\setminus\overline{B_{\epsilon}}\quad\text{and}\quad \Delta^2u-\kappa^4u=0\quad\text{in }B_{\epsilon}.
\]
Here, denoting by $f(r,\theta)$ the value of the source function $f$ at the point $x=r(\cos{\theta},\sin{\theta})$ in $\R^2$. This given that the boundary conditions are imposed at $r=\epsilon$ and are given by
\[
u^s(\epsilon,\theta)-u(\epsilon,\theta)=-u^i(\epsilon,\theta),\quad \partial_ru^s(\epsilon,\theta)-\partial_ru(\epsilon,\theta)=-\partial_ru^i(\epsilon,\theta)
\]
and
\[
\Delta u^s(\epsilon,\theta)-\Delta u(\epsilon,\theta)=-\Delta u^i(\epsilon,\theta),\quad \partial_r\Delta u^s(\epsilon,\theta)-\partial_r \Delta u(\epsilon,\theta)=-\partial_r \Delta u^i(\epsilon,\theta).
\]
We recall the Jacobi--Anger expansion for the incident plane wave
\[
u^i(r,\theta)=\sum_{|\ell|=0}^{\infty}\text{i}^{\ell}J_{\ell}(\kappa r)\text{e}^{\text{i}\ell (\theta-\phi)}
\]
where the incident direction is given by $d=(\cos{\phi},\sin{\phi})$. With this in mind, we make the ansatz that the propagative $u_{\text{H}}$ and evanescent $u_{\text{M}}$ components of the scattered field $u^s$ can be written in terms of the following  Fourier series expansion
\[
u_{\text{H}}(r,\theta)=\sum_{|\ell|=0}^{\infty}\text{i}^{\ell}a_{\ell}H_{\ell}^{(1)}(\kappa r)\text{e}^{\text{i}\ell(\theta-\phi)}\quad \text{and}\quad u_{\text{M}}(r,\theta)=\sum_{|\ell|=0}^{\infty}\text{i}^{\ell}b_{\ell}H_{\ell}^{(1)}(\text{i}\kappa r)\text{e}^{\text{i}\ell(\theta-\phi)}.
\]
Similarly, it has been shown that the one has a similar decompsition for the total field inside the scatterer. Therefore, we consider the biharmonic wave decomposition of the total field $u=u_{\text{pr}}+u_{\text{ev}}$ such that 
\[
u_{\text{pr}}(r,\theta)=\sum_{|\ell|=0}^{\infty}\text{i}^{\ell}c_{\ell}J_{\ell}(\kappa \sqrt[4]{n}r)\text{e}^{\text{i}\ell(\theta-\phi)}\quad \text{and}\quad u_{\text{ev}}(r,\theta)=\sum_{|\ell|=0}^{\infty}\text{i}^{\ell}d_{\ell}J_{\ell}(\text{i}\kappa \sqrt[4]{n} r)\text{e}^{\text{i}\ell(\theta-\phi)},
\]
where $J_{\ell}$ is the Bessel function of the first kind. We notice that we have 
$$\Delta u_{\text{pr}}=-\kappa^2\sqrt{n}u_{\text{pr}} \quad \text{ and } \quad \Delta u_{\text{ev}}=\kappa^2\sqrt{n}u_{\text{ev}}.$$ Therefore, the four boundary conditions at $r=\epsilon$ impose a linear system of equations given by
\[
\mathbb M\mathbf{u}=\mathbf{f},
\]
where
\[
\mathbb{M} =
\begin{bmatrix}
    H_{\ell}^{(1)}(\kappa \varepsilon) &
    H_{\ell}^{(1)}(\text{i}\kappa \varepsilon) &
    -J_{\ell}(\kappa\sqrt[4]{n}\,\varepsilon) &
    -J_{\ell}(\text{i}\kappa\sqrt[4]{n}\,\varepsilon)\\[8pt]

    \kappa H_{\ell}^{(1)\prime}(\kappa \varepsilon) &
    \text{i}\kappa H_{\ell}^{(1)\prime}(\text{i}\kappa \varepsilon) &
    -\kappa \sqrt[4]{n}\,J_{\ell}^\prime(\kappa\sqrt[4]{n}\,\varepsilon) &
    -\text{i}\kappa \sqrt[4]{n}\,J_{\ell}^\prime(\text{i}\kappa\sqrt[4]{n}\,\varepsilon)\\[8pt]

    -\kappa^2 H_{\ell}^{(1)}(\kappa \varepsilon) &
    \kappa^2 H_{\ell}^{(1)}(\text{i}\kappa \varepsilon) &
    \kappa^2\sqrt{n}\,J_{\ell}(\kappa\sqrt[4]{n}\,\varepsilon) &
    -\kappa^2\sqrt{n}\,J_{\ell}(\text{i}\kappa\sqrt[4]{n}\,\varepsilon)\\[8pt]

    -\kappa^3 H_{\ell}^{(1)\prime}(\kappa \varepsilon) &
    \text{i}\kappa^3 H_{\ell}^{(1)\prime}(\text{i}\kappa \varepsilon) &
    \kappa^3 (\sqrt[4]{n})^3 J_{\ell}^\prime(\kappa\sqrt[4]{n}\,\varepsilon) &
    -\text{i}\kappa^3 (\sqrt[4]{n})^3 J_{\ell}^\prime(\text{i}\kappa\sqrt[4]{n}\,\varepsilon)
\end{bmatrix}
\]
and
\[
\mathbf{u}=\begin{bmatrix}
    a_{\ell}\\
    b_{\ell}\\
    c_{\ell}\\
    d_{\ell}
\end{bmatrix} \quad \text{with}\quad \mathbf{f}=\begin{bmatrix}
    -J_{\ell}(\kappa \epsilon)\\
    -\kappa J_{\ell}^{\prime}(\kappa \epsilon)\\
    \kappa^2 J_{\ell}(\kappa \epsilon)\\
    \kappa^3 J_{\ell}^{\prime}(\kappa\epsilon)
\end{bmatrix}
\]
for every $\ell\in \mathbb Z$. 

This implies that, by solving the system for the Fourier coefficients (in particular $a_{\ell}$) we have the biharmonic far--field pattern corresponding to the disks of radius $\epsilon$ is given by
\[
u^{\infty}(\theta,\phi)=\frac{4}{\text{i}}\sum_{|\ell|=0}^{\infty}a_{\ell}\text{e}^{\text{i}\ell(\theta-\phi)}
\]
from the asymptotics of the Hankel functions. We will compare this exact far--field pattern with the Born approximation $u_{\text{B}}^{\infty}(\theta,\phi)$ given by
\[
u_{\text{B}}^{\infty}(\theta,\phi)=\frac{\kappa^2}{2}\int_{B_{\epsilon}}(n-1)\text{e}^{-\text{i}\kappa y \cdot (\hat{x}-d)}\,\text{d}y
\]
where we use the fact that 
$$\hat{x}=(\cos{\theta},\sin{\theta}) \quad \text{and} \quad d=(\cos{\phi},\sin{\phi}).$$
We truncate the infinite series for $u^{\infty}(\theta,\phi)$ taking only the values $|\ell| \leq 10$, corresponding to a total of $21$ terms in the truncated series.

Here, consider the far--field matrices 
\[
\mathbf{F}_{\text{exact}}=\left[u^{\infty} (\theta_i,\phi_j) \right]_{i,j=1}^{64}\quad \text{and}\quad \mathbf{F_{\text{Born}}}=\left[u_{\text{B}}^{\infty}(\theta_i,\phi_j)\right]_{i,j=1}^{64}
\]
with $\theta_i=2\pi(i-1)/64$ and $\phi_j=2\pi(j-1)/64$ for $i,j=1,\dots,64$. We find that the absolute error
\[\|\mathbf{F}_{\text{exact}} - \mathbf{F}_{\text{Born}}\|_{\infty}\]
for small scatterers $B_{\epsilon}$ decreases slightly for smaller $\epsilon$. Table 1 displays the relative errors computed for a fixed wavenumber $\kappa = 2$, a fixed refractive index $n=1.0+0.5\text{i}$, and $64$ incident and observation directions. Thus, the Born approximation proves relatively accurate for weak scatterers, e.g., disks with radii given by small $\epsilon>0$. This would imply that the Born approximation for the biharmonic scattering problem is a constant multiple to the equivalent approximation of acoustic scatterers. Even with this being the case, we will provide a few numerical examples using the far--field data computed via the Born approximation. It is clear from previous works that this data can recover weak scatterers but we still provide the examples for completeness.  

\begin{table}[h!]\label{first_table}
\centering
\begin{tabular}{c|c}
\hline
$\epsilon$ & $\|\mathbf{F}_{\text{exact}} - \mathbf{F}_{\text{Born}}\|_{\infty}$ \\
\hline
1.00 &  0.6480\\
0.90 & 0.5008\\
0.80&  0.3784 \\
0.70 & 0.2774\\
0.60 & 0.1935\\
0.50 & 0.2134 \\
\hline
\end{tabular}
\caption{Absolute error between the exact and Born far-field matrices for small disks $B_{\epsilon}$.}
\label{tab:born_error}
\end{table}

\begin{figure}[h!]
\centering
\begin{minipage}{0.48\textwidth}
    \centering
    \includegraphics[width=\textwidth]{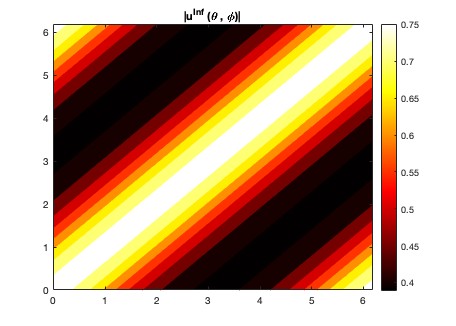} 
\end{minipage}
\hfill
\begin{minipage}{0.48\textwidth}
    \centering
    \includegraphics[width=\textwidth]{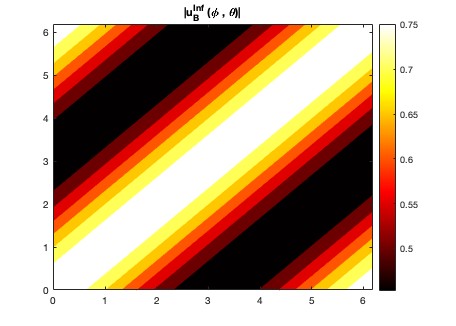} 
\end{minipage}
\caption{Exact and Born-approximated far-field matrices for scattering by a disk of radius $\epsilon=0.5$ with complex refractive index $n=1.0+0.5\text{i}$
at wavenumber $\kappa=2$. The comparison illustrates the close approximation with a slight deviation of the Born approximation from the exact solution in the presence of moderate absorption.}
\label{fig:disk_matrices}
\end{figure}

Figure \ref{fig:disk_matrices} compares the exact and Born--approximated far--field matrices for the disk of radius $\epsilon=0.5$, $n=1.0+1.5\text{i}$, and wavenumber $\kappa=2$. The Born and exact far--field matrices look strikingly similar, capturing similar angular scattering patterns.  Figure \ref{fig:disk_reconstruction} shows example reconstructions of a penetrable small disk with radius $\epsilon = 0.5$, wavenumber of $\kappa=2$, and refractive index $n=1.0+0.5\text{i}$.  Using the imaginary part of the exact biharmonic far--field pattern and its Born approximation as far--field data, we applied the factorization method, sampling the computational domain $[-2,2] \times [-2,2]$ on a $200 \times 200$ grid.  The exact reconstruction captures finer features, while the Born approximation provides a qualitatively accurate approximation.

\begin{figure}[h!]
\centering
\begin{minipage}{0.48\textwidth}
    \centering
    \includegraphics[width=\textwidth]{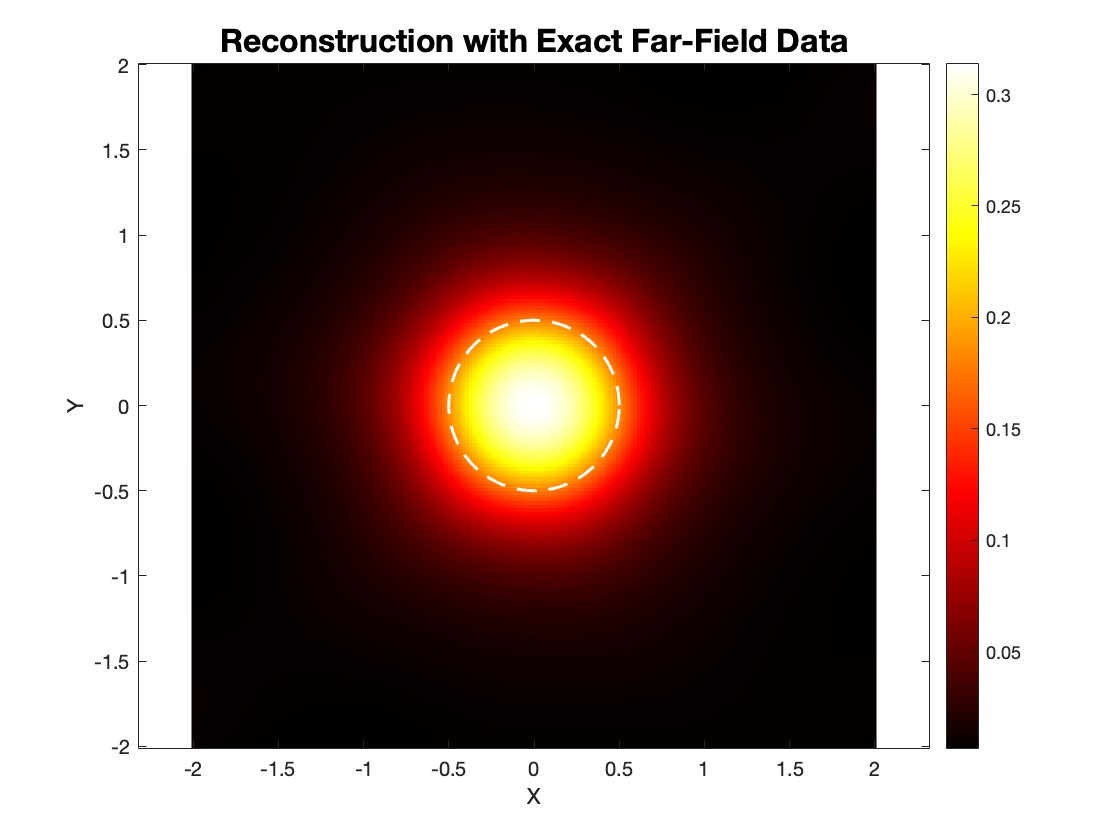} 
\end{minipage}
\hfill
\begin{minipage}{0.48\textwidth}
    \centering
    \includegraphics[width=\textwidth]{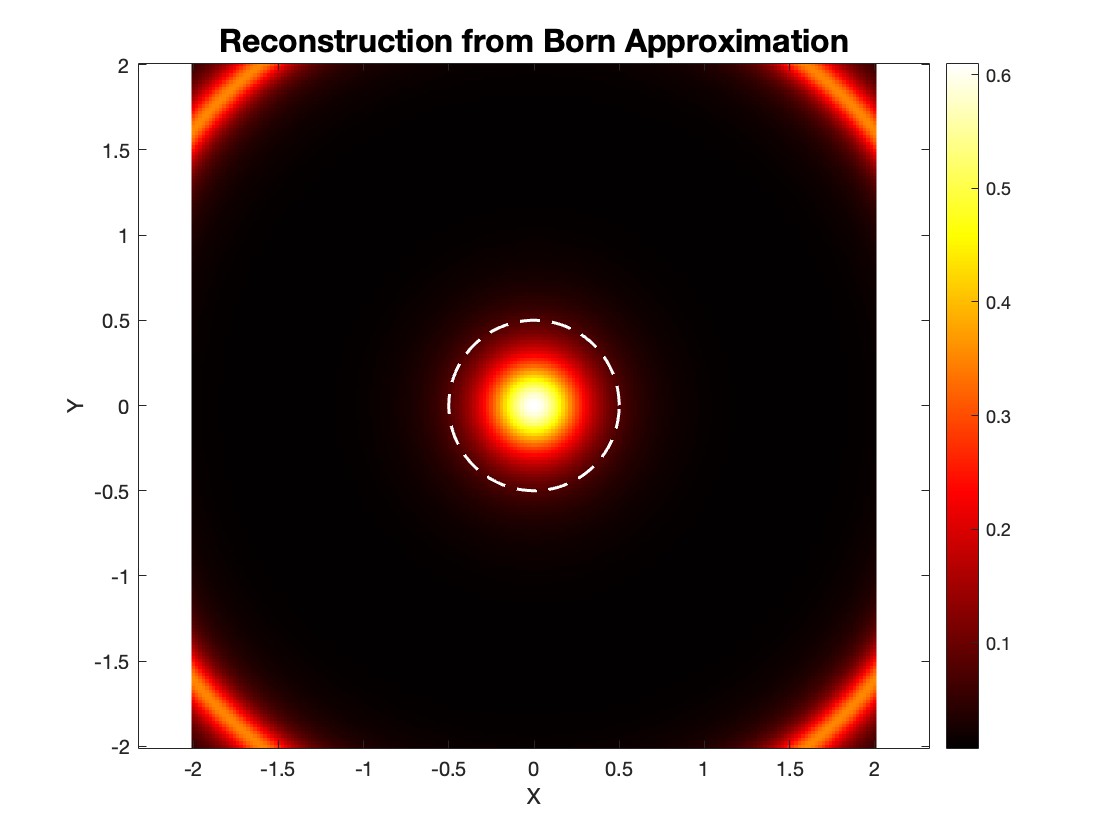} 
\end{minipage}
\caption{Comparison of the reconstructed penetrable disk $D=B_{\epsilon}$ using far-field data for the disk scatterer with radius $\epsilon = 0.5$, wavenumber $\kappa = 2$, and $n=1.0+0.5\text{i}$.}
\label{fig:disk_reconstruction}
\end{figure}
 
In order to present other numerical examples, we consider reconstructing two different shapes: a star--shaped region and a kite--shaped region, defined by
\[
\partial D=( x_1(t),x_2(t))^\top,\quad 0\leq t\leq 2\pi.
\]
Here we assume that the function $x_j(t)$ are twice continuously differentiable on $[0,2\pi]$ for $j=1,2$. In our numerical examples, we consider the star--shaped region which has a boundary that is parameterized by 
\[
\partial D = 0.175\Big(0.3\cos(5t)+2\Big)(\cos t, \sin t)^{\top}, \quad \text{ for  $t \in [0,2\pi)$}
\]
and the kite-shaped boundary that is given parametrically by
\[
\partial D=0.5\big(0.75\cos t + 0.3\cos 2t, \sin t\big)^\top, \quad \text{ for  $t \in [0,2\pi)$}.
\]
In these examples, we use fixed wavenumbers $\kappa=2\pi$ and $\kappa=3\pi$ with a constant contrast $n=2.5+0.5\text{i}$. The sampling region is $[-2,2]\times [-2,2]$, and we select $200\times 200$ equally spaced points within the region. We present contour plots of the imaging functions for both scatterers at different noise levels to assess the stability of our method. For our reconstructions, Tikhonov regularization with regularization parameter $\alpha = 10^{-5}$ was employed to stabilize the reconstructions. In all figures, the dotted white line represents the boundary $\partial D$ of the cavity.

\subsection{Example 1. A star--shaped cavity}
We present numerical reconstructions for the penetrable star--shaped
cavity using the factorization method imaging function $W_{\text{FM}}(z)$. Figure \ref{fig:star1_reconstruction} shows the reconstruction obtained with $W_{\text{FM}}$ with no error and fixed wavenumbers $\kappa=2\pi$ and $\kappa=3\pi$, respectively. Figure \ref{fig:star2_reconstruction} shows the reconstruction obtained with the imaging function $W_{\text{FM}}$ with noise levels given by $\delta=0.05$ and $\delta=0.10$, respectively, representing a significant amount of noise in the data. The left panel corresponds to the $5\%$ noise ($\delta = 0.05$), while the right panel shows the $10\%$ noise ($\delta = 0.10$). Despite the increase in noise, the differences in the imaging function are minor and the factorization method still produces accurate reconstructions, as highlighted by the white dashed line indicating the true boundary $\partial D$.  Figure \ref{fig:star3_reconstruction} shows reconstructions of the penetrable star-shaped scatterer for two different locations. The left panel corresponds to a small shift to the left of the origin at $(-0.5. 0.5)$ with the sampling region $[-2,2]\times [-2,2]$, while the right panel shows a shift to the right of the grid center at $(1,1.5)$ with a sampling region of $[-2,2]\times [-2,2]$ with $5\%$ noise added for both reconstructions. The reconstructions are accurate when shifted and the recovery of the penetrable star-shaped scatterer is robust towards noise. We note that here and all our other examples, that the white dashed line in the figures indicates the true boundary $\partial D$. This allows us to see that accuracy of the factorization method. 

\begin{figure}[h!]
\centering
\begin{minipage}{0.48\textwidth}
    \centering
    \includegraphics[width=\textwidth]{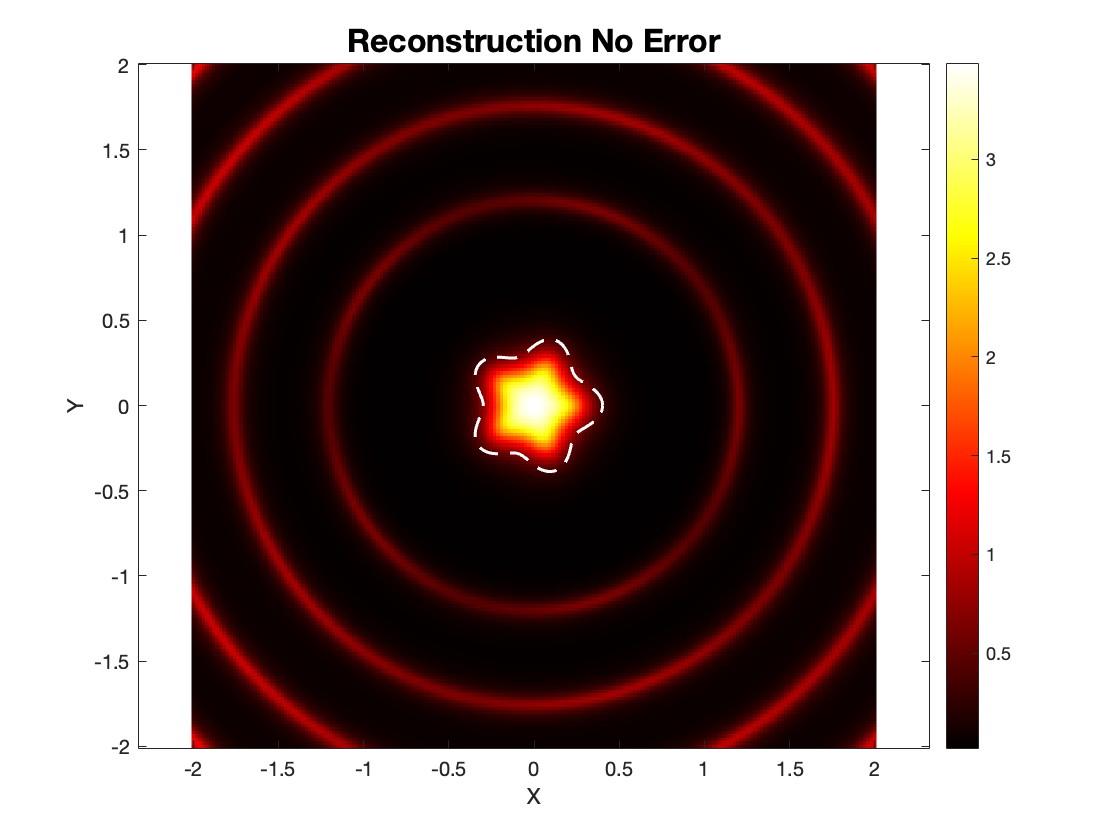} 
\end{minipage}
\hfill
\begin{minipage}{0.48\textwidth}
    \centering
    \includegraphics[width=\textwidth]{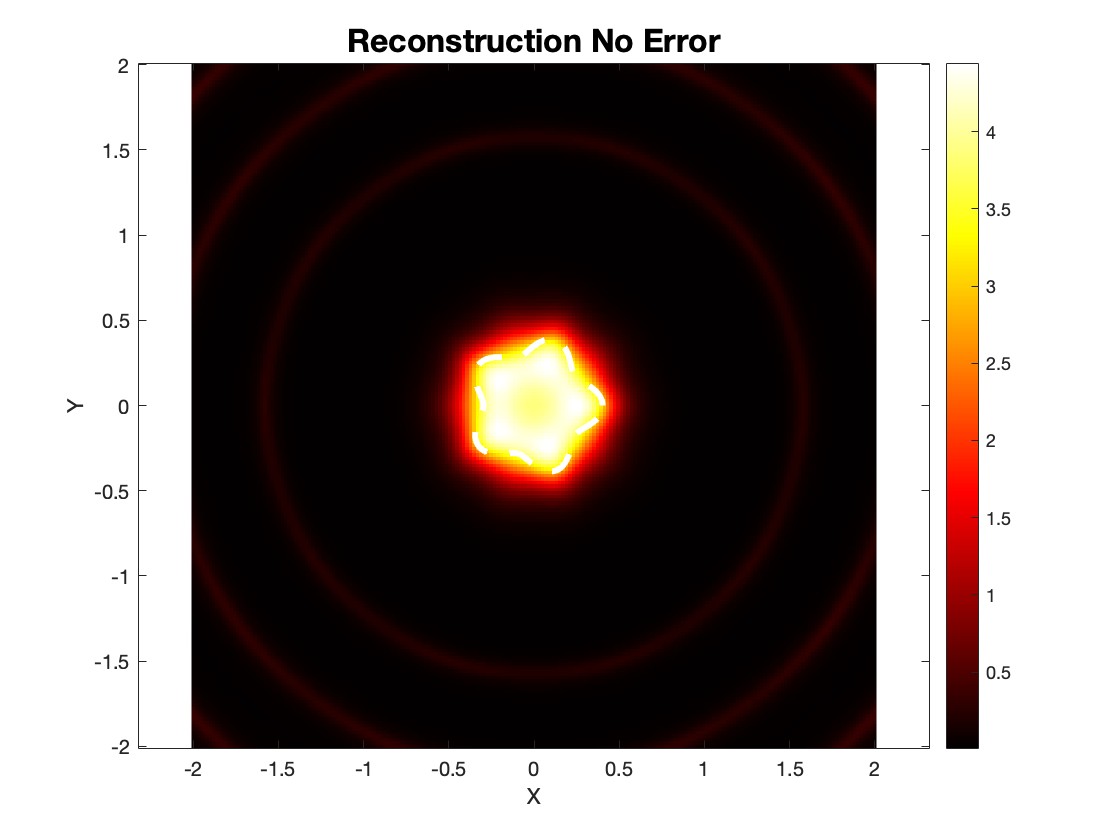} 
\end{minipage}
\caption{Reconstruction of the star--shaped penetrable cavity using the factorization method for two different wavenumbers: $\kappa = 2\pi$ (left) and $\kappa = 3\pi$ (right) with a constant contrast $n=2.5+0.5\text{i}$. }
\label{fig:star1_reconstruction}
\end{figure}

\begin{figure}[h!]
\centering
\begin{minipage}{0.48\textwidth}
    \centering
    \includegraphics[width=\textwidth]{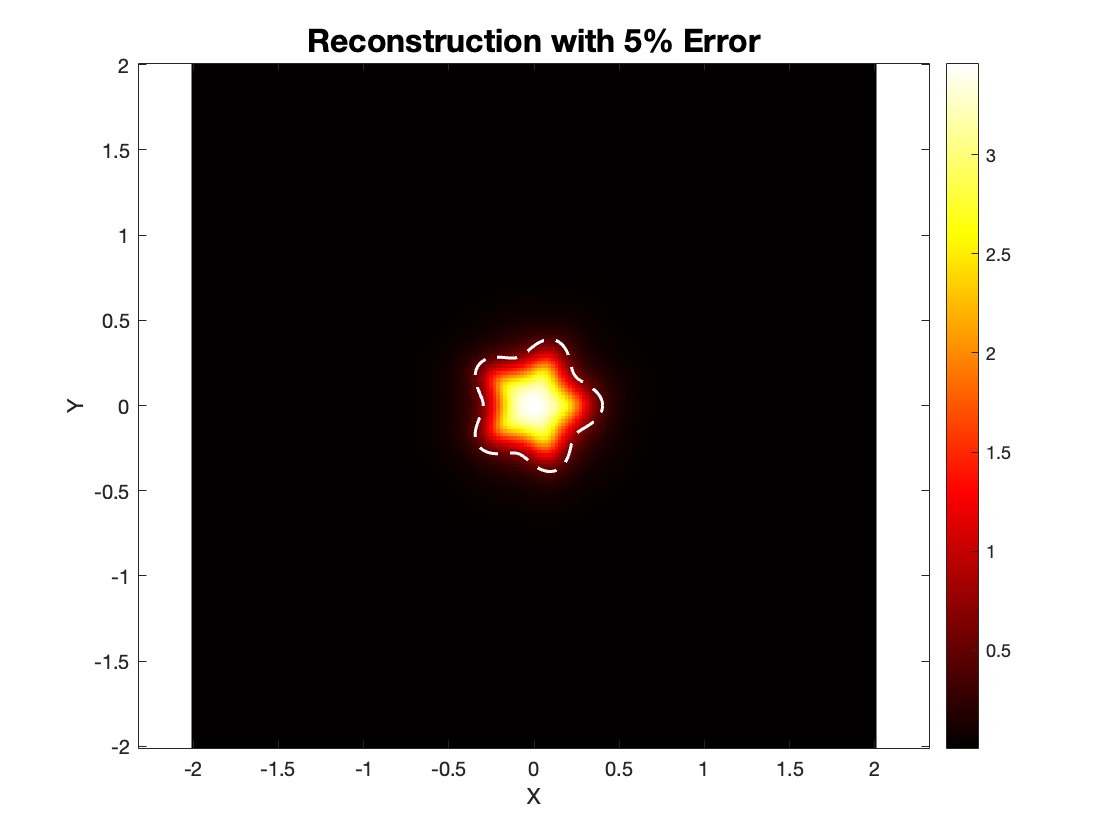} 
\end{minipage}
\hfill
\begin{minipage}{0.48\textwidth}
    \centering
    \includegraphics[width=\textwidth]{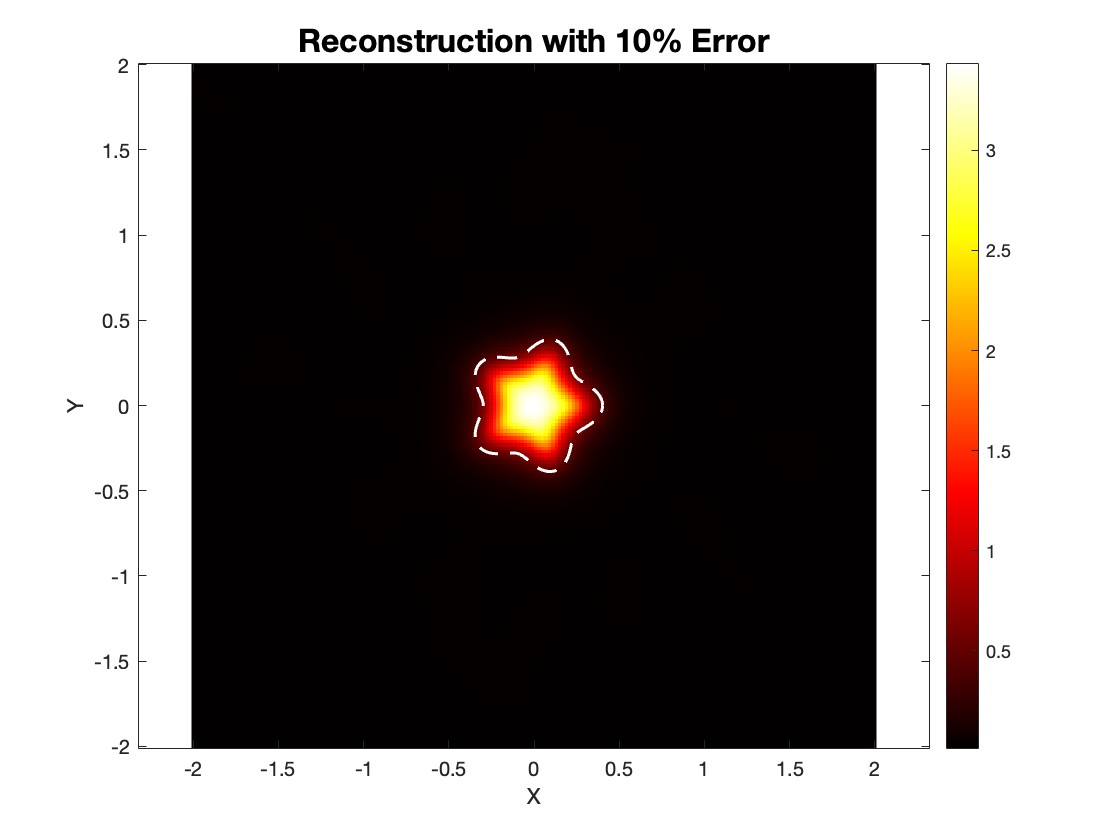} 
\end{minipage}
\caption{Reconstruction of the star--shaped penetrable cavity using the factorization method for $\kappa = 2\pi$ with noise added. Left: 5\% noise ($\delta = 0.05$). Right: 10\% noise ($\delta = 0.10$).}
\label{fig:star2_reconstruction}
\end{figure}

\begin{figure}[h!]
\centering
\begin{minipage}{0.48\textwidth}
    \centering
    \includegraphics[width=\textwidth]{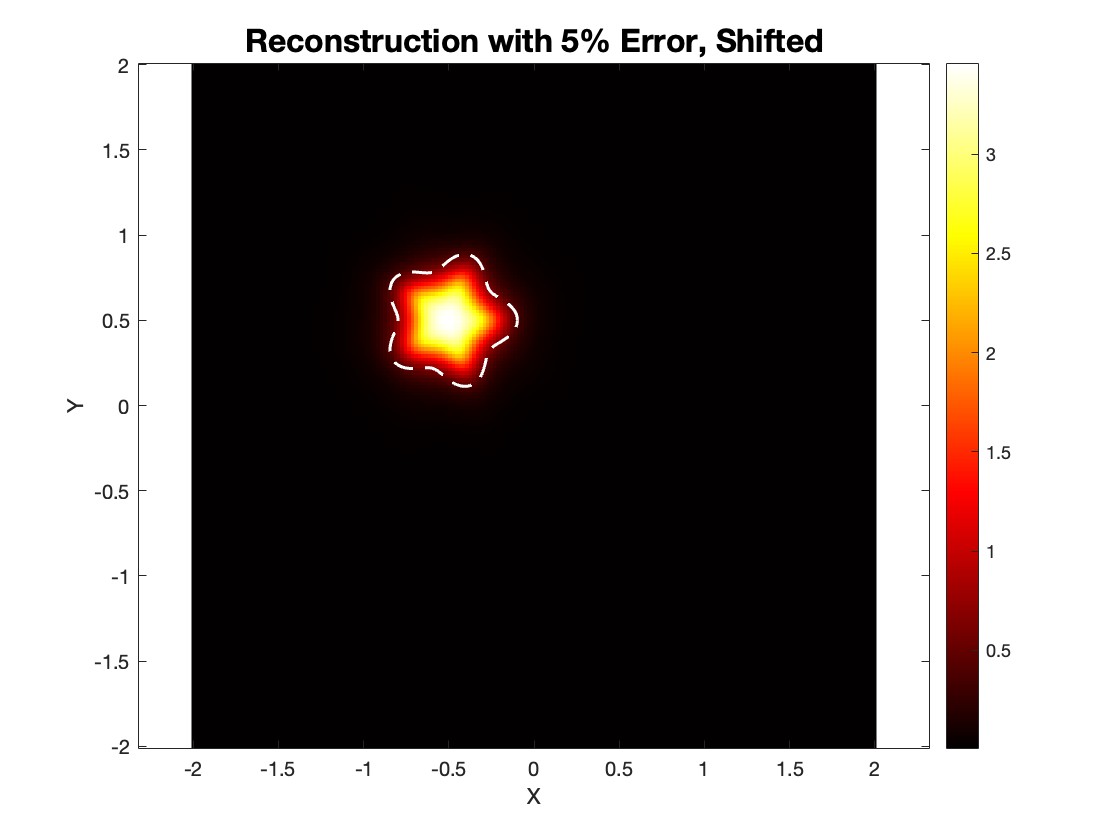} 
\end{minipage}
\hfill
\begin{minipage}{0.48\textwidth}
    \centering
    \includegraphics[width=\textwidth]{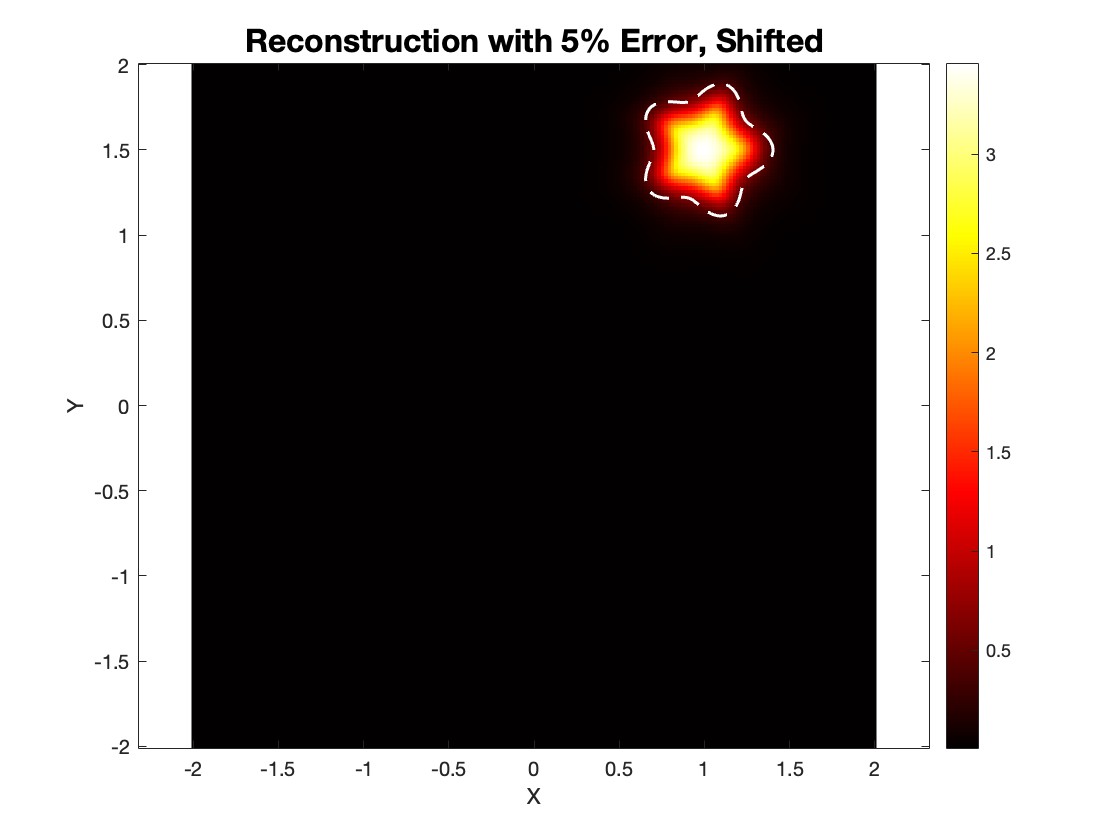} 
\end{minipage}
\caption{Reconstruction of the shifted star--shaped penetrable cavity using the factorization method for $\kappa = 2\pi$ with $5\%$ noise and index of refraction $n=2.5+0.5\text{i}$. Left: Scatterer shifted to $(-0.5, 0.5)$. Right: Scatterer shifted to $(1,1.5)$.}
\label{fig:star3_reconstruction}
\end{figure}

\subsection{Example 2. A kite--shaped cavity.}
We now present another numerical example using the penetrable kite--shaped cavity using the factorization method imaging function $W_{\text{FM}}(z)$.
For these reconstructions, we used the same physical parameters as in the star-shaped cavity. In Figure \ref{fig:kite1_reconstruction}, we observe that the reconstructions of the kite--shaped cavity are reasonably accurate, with slightly improved spatial resolution as the wavenumber $\kappa$ increases. 

Figure \ref{fig:kite2_reconstruction} shows that the reconstructions remain robust in the presence of random noise, highlighting the effectiveness of the factorization method in identifying the penetrable cavity. As shown in Figure \ref{fig:kite3_reconstruction}, the factorization method successfully recovers the size, shape, and location of the kite--shaped cavity. The left panel corresponds to a small shift to the left of the origin at $(-0.5. 0.5)$ with the sampling region $[-2,2]\times [-2,2]$, while the right panel shows a shift to the right of the grid center at $(1,1.5)$ with a sampling region of $[-2,2]\times [-2,2]$ with $5\%$ noise added for both reconstructions. 

\begin{figure}[h!]
\centering
\begin{minipage}{0.48\textwidth}
    \centering
    \includegraphics[width=\textwidth]{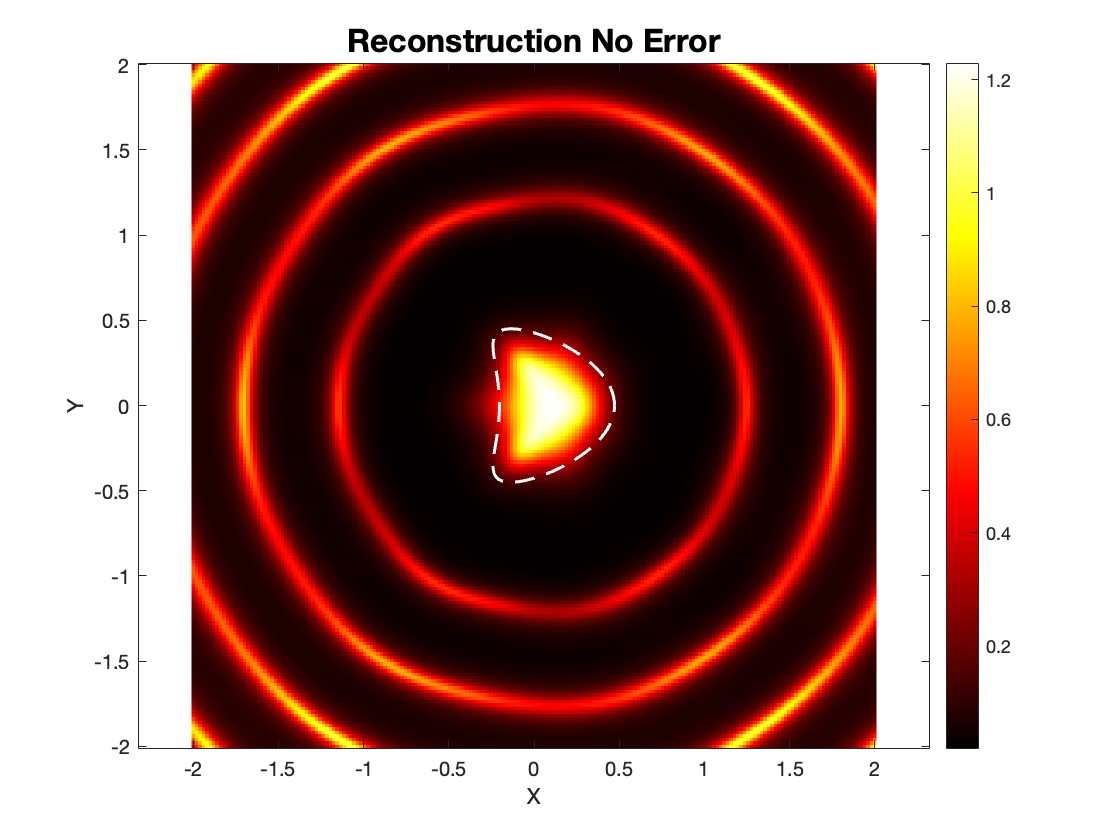} 
\end{minipage}
\hfill
\begin{minipage}{0.48\textwidth}
    \centering
    \includegraphics[width=\textwidth]{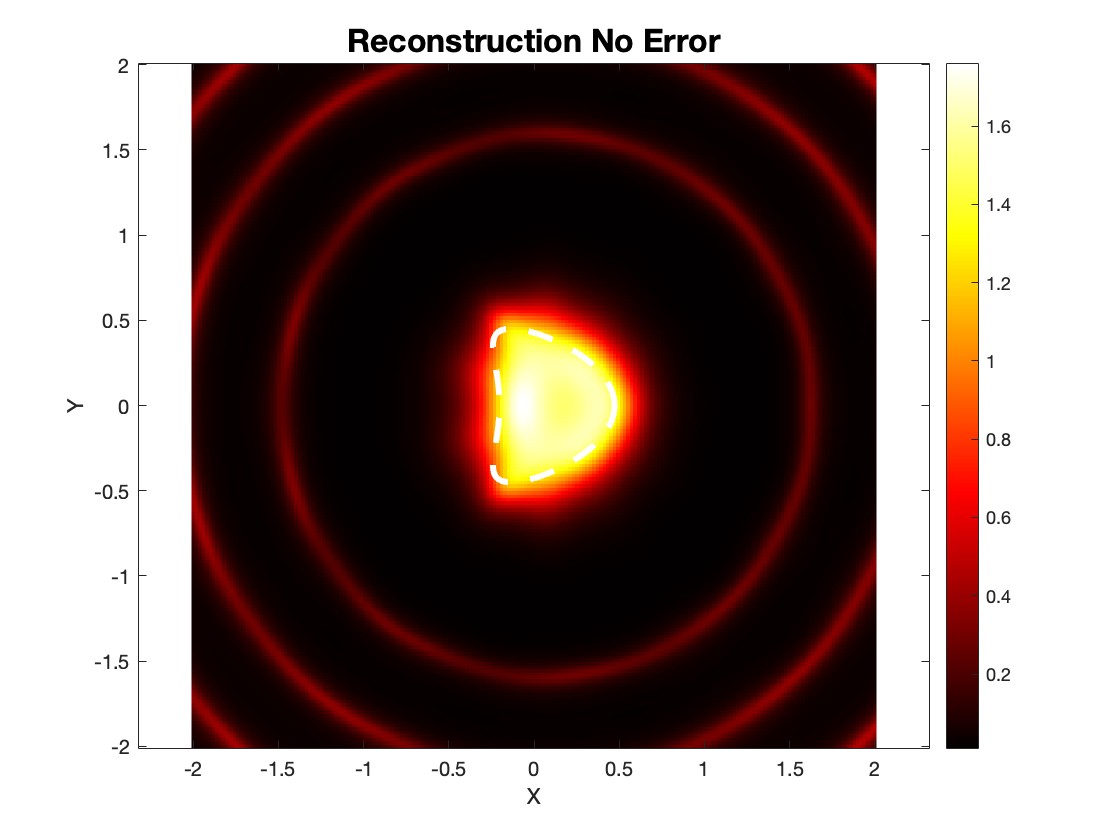} 
\end{minipage}
\caption{Reconstruction of the penetrable kite--shaped cavity using the factorization method for two different wavenumbers: $\kappa = 2\pi$ (left) and $\kappa = 3\pi$ (right) with a constant contrast $n=2.5+0.5\text{i}$. }
\label{fig:kite1_reconstruction}
\end{figure}

\begin{figure}[h!]
\centering
\begin{minipage}{0.48\textwidth}
    \centering
    \includegraphics[width=\textwidth]{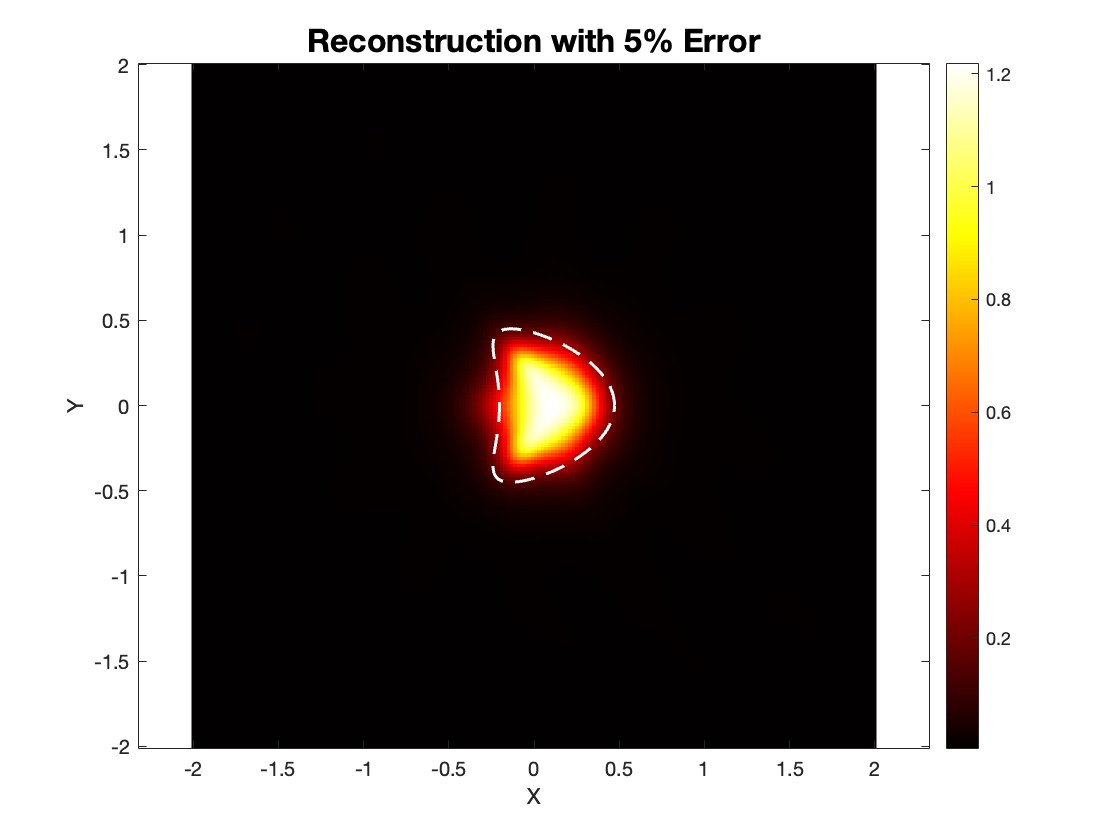} 
\end{minipage}
\hfill
\begin{minipage}{0.48\textwidth}
    \centering
    \includegraphics[width=\textwidth]{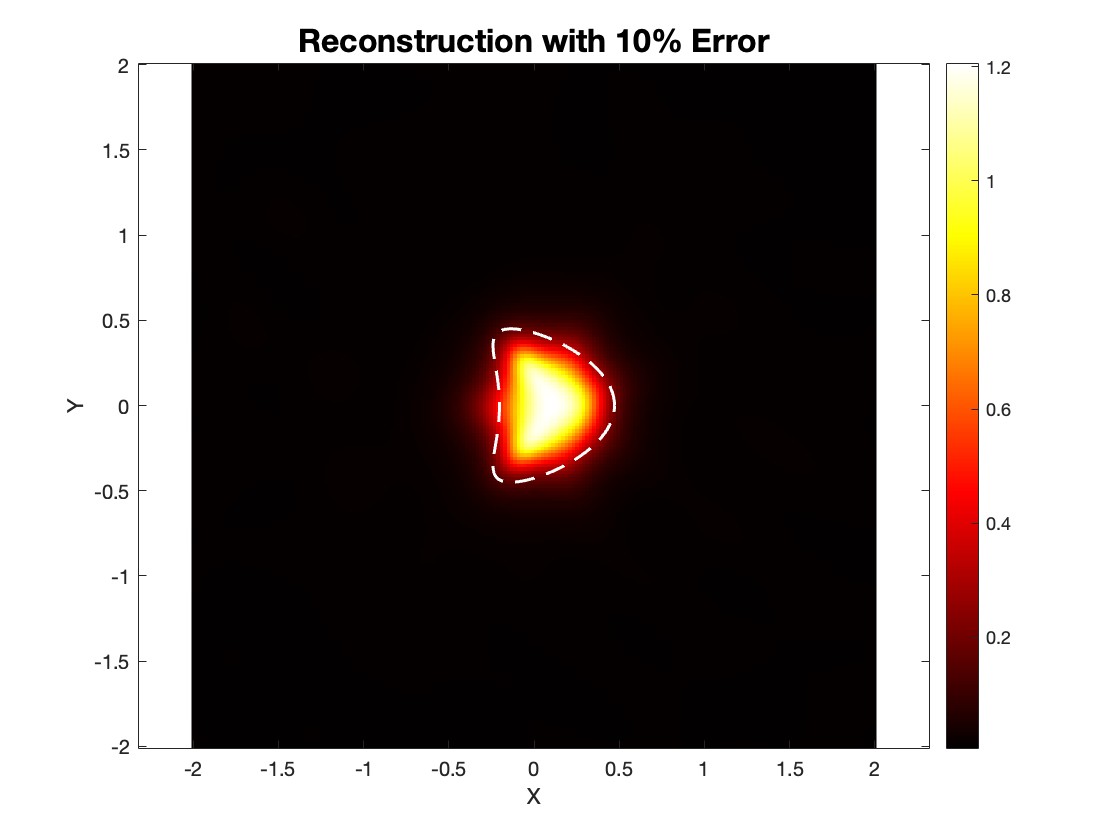} 
\end{minipage}
\caption{Reconstruction of the penetrable kite--shaped cavity using the factorization method for $\kappa = 2\pi$ with noise added. Left: 5\% noise ($\delta = 0.05$). Right: 10\% noise ($\delta = 0.10$).}
\label{fig:kite2_reconstruction}
\end{figure}

\begin{figure}[h!]
\centering
\begin{minipage}{0.48\textwidth}
    \centering
    \includegraphics[width=\textwidth]{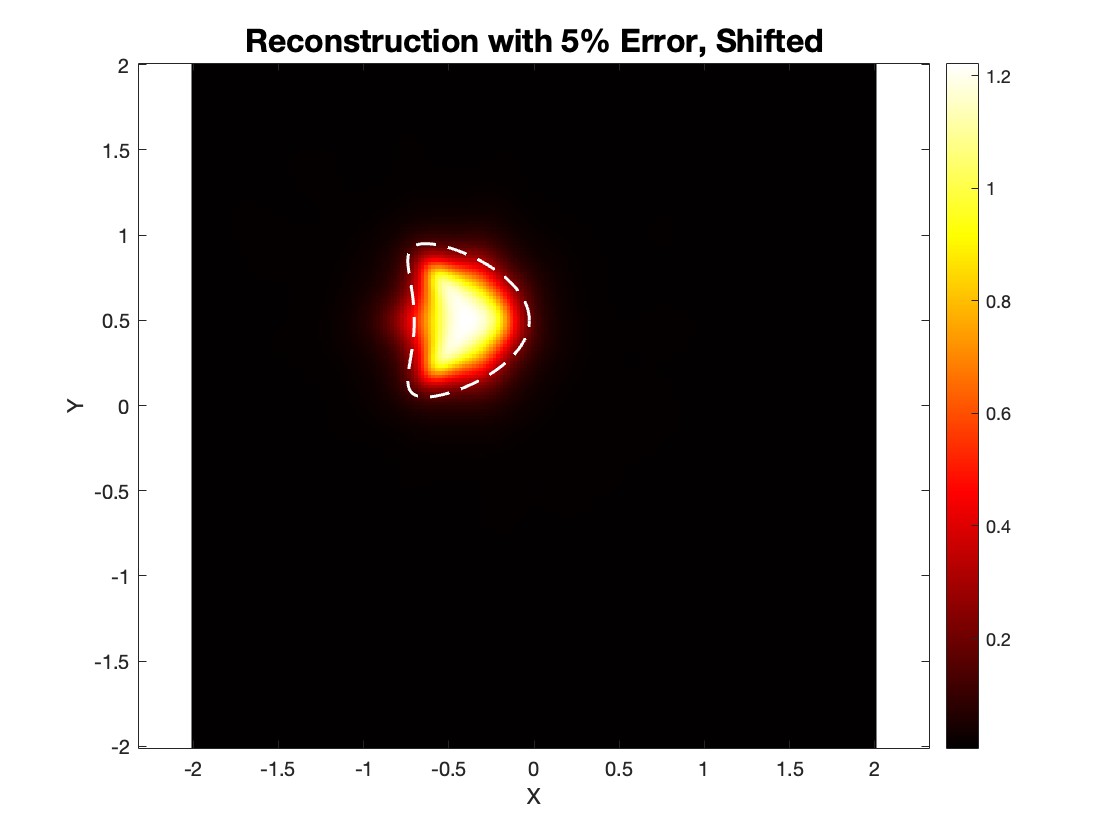} 
\end{minipage}
\hfill
\begin{minipage}{0.48\textwidth}
    \centering
    \includegraphics[width=\textwidth]{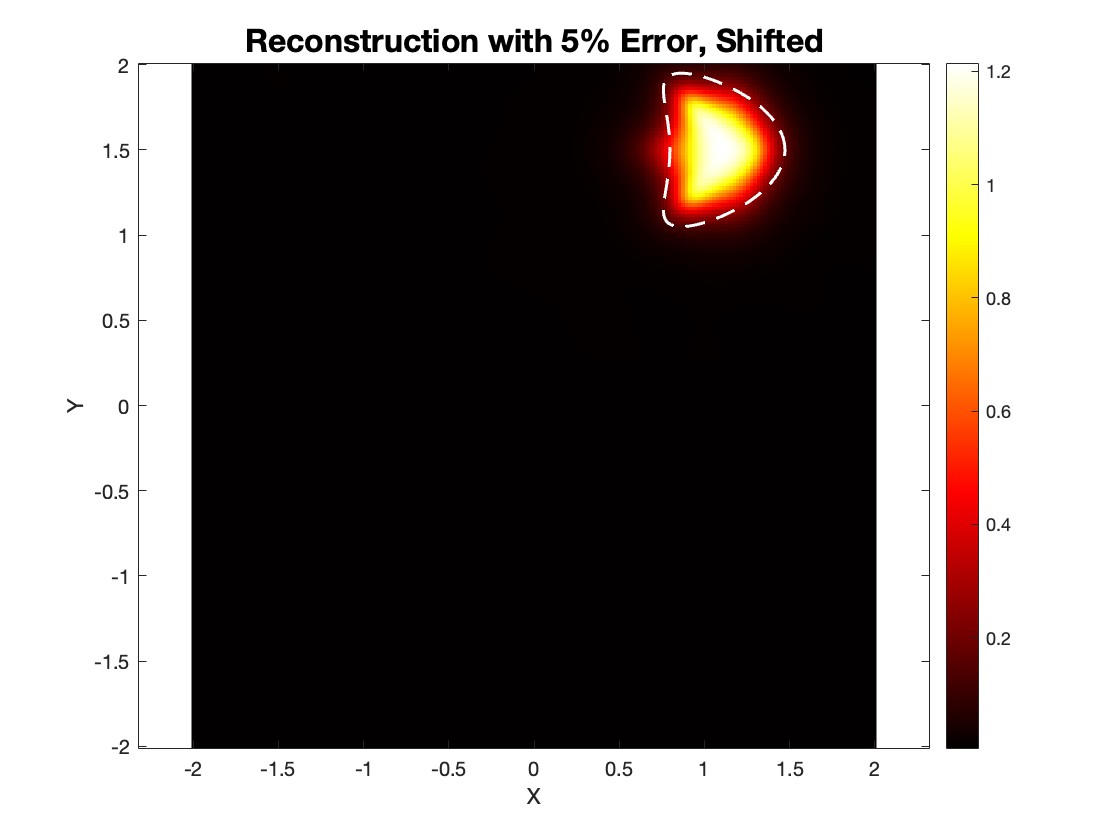} 
\end{minipage}
\caption{Reconstruction of the shifted kite--shaped penetrable cavity using the factorization method for $\kappa = 2\pi$ with $5\%$ noise and index of refraction $n=2.5+0.5\text{i}$. Left: Scatterer shifted to $(-0.5, 0.5)$. Right: Scatterer shifted to $(1,1.5)$.}
\label{fig:kite3_reconstruction}
\end{figure}

\subsection{Example 3. Two small disks.}
For our last numerical example, we consider the reconstruction of two disks. This example shows that the factorization method is capable of recovering two scatterers. Just as before, we will use the Born approximation to compute the far--field data. To this end, we assume that the boundary of the scatterer is given by $\partial D= \partial D_1 \cup \partial D_2$ where 
$$\partial D_1 = 0.5(\cos t, \sin t)^{\top} + (-0.5, 0.5)^\top \quad \text{ and } \quad \partial D_1 = 0.5(\cos t, \sin t)^{\top} + (1, 1.5)^\top$$
for $t \in [0,2\pi)$. Using the parameters $\kappa=2\pi$ and $n=2.5+0.5\text{i}$ as in the previous examples we present the reconstructions. We present reconstructions with both noiseless and 5$\%$ noisy far--field data. By again sampling over $[-2,2]\times [-2,2]$ with $200\times 200$ equally spaced points sampling points. In Figure \ref{fig:2disk_reconstruction}, we see the reconstructions for this example.  

\begin{figure}[h!]
\centering
\begin{minipage}{0.48\textwidth}
    \centering
    \includegraphics[width=\textwidth]{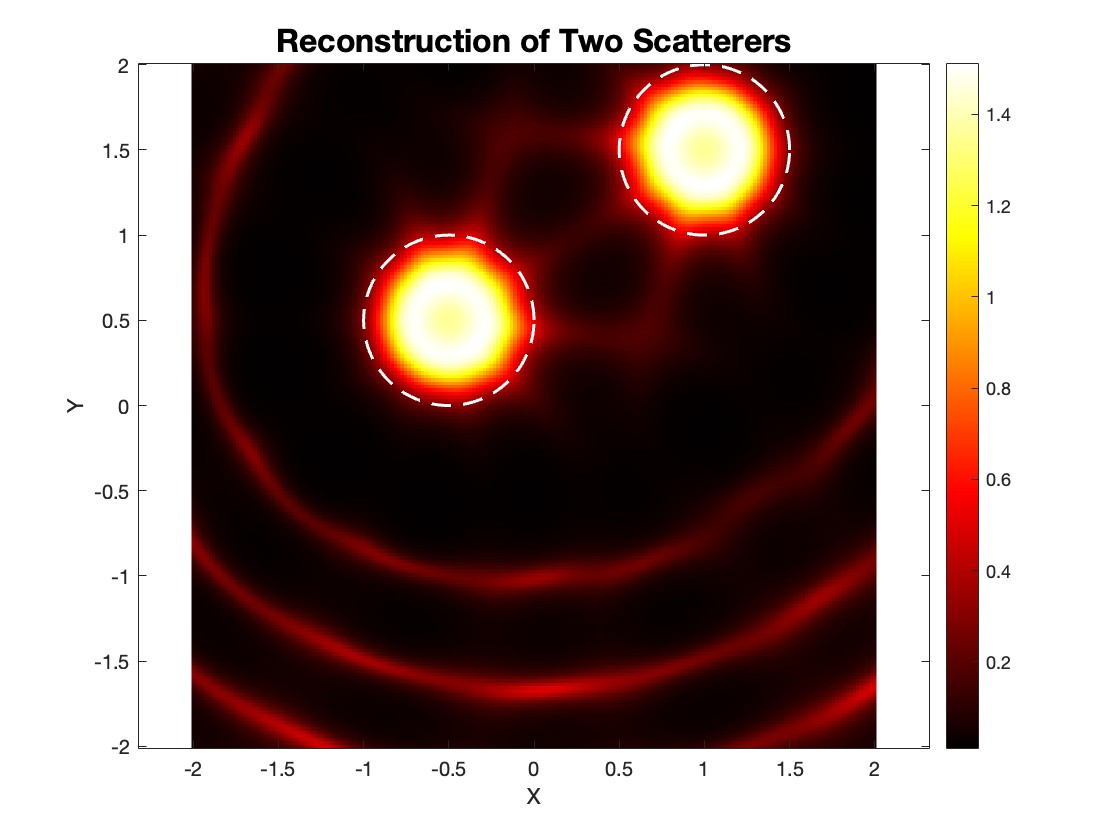} 
\end{minipage}
\hfill
\begin{minipage}{0.48\textwidth}
    \centering
    \includegraphics[width=\textwidth]{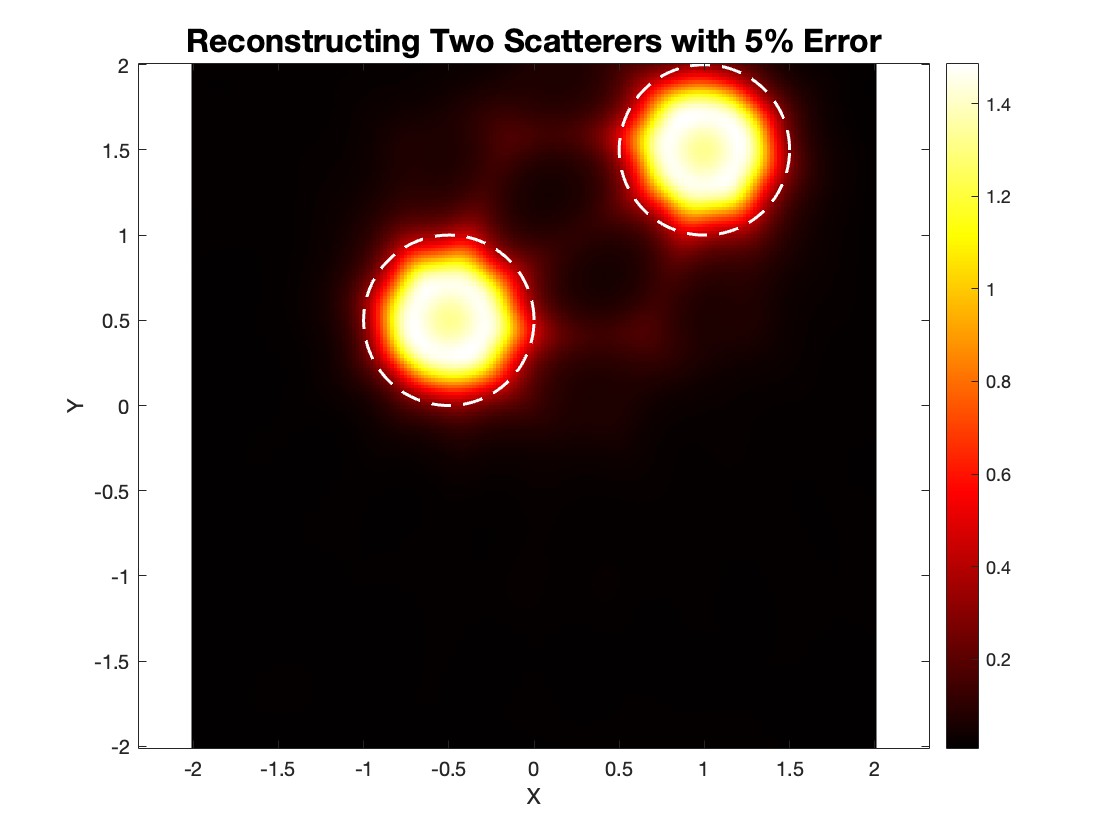} 
\end{minipage}
\caption{Reconstruction of the penetrable scatterer given by two disks using the factorization method for $\kappa = 2\pi$ and index of refraction $n=2.5+0.5\text{i}$. Left: 0\% noise ($\delta = 0.00$). Right: 5\% noise ($\delta = 0.05$).}
\label{fig:2disk_reconstruction}
\end{figure}

\section{Conclusion}\label{section_conclusion}
In this paper, we presented a theoretical justification of the factorization method for the inverse biharmonic scattering problem for a penetrable, absorbing scatterer, thereby providing a rigorous analytical framework for reconstructing transmission-type obstacles in Kirchhoff--Love plates. Building on recent advances in sampling methods for biharmonic waves, our formulation establishes a spectral characterization of the far--field operator associated with the transmission problem and demonstrates that the support of the scatterer can be identified from the range of a self-adjoint operator from the far--field data. In addition, we analyzed the Born approximation for weak scatterers within the biharmonic setting and evaluated its performance through numerical experiments, confirming the reliability of the approximation for weak scatterers. Numerical experiments were performed to demonstrate the effectiveness of our method. The inclusion of an absorbing condition plays a key role, as it guarantees the uniqueness and well--posedness of the associated direct problem (\ref{eqn1})-(\ref{SRC}) (see \cite{CejaAyalaHarrisSanchezVizuet2025}) while ensuring the coercivity property of the auxiliary middle operator $\Im{(T)}$ required in the factorization method. 

There is future work to be done on extending the factorization method analysis to impenetrable obstacles like clamped or simply supported cavities, where the factorization method becomes more intricate. In contrast to the transmission case considered here, the corresponding direct problems involve boundary conditions that require a careful treatment through boundary integral formulations, and the associated far--field operators no longer exhibit the same factorization structure. Another point of significant practical interest will be to investigate the application of sampling methods to the transmission problem under limited--aperture data, as most physical experiments are restricted to partial observations of the scattered field. Finally, inverse near--field scattering has not yet been applied to the biharmonic transmission problem. Developing corresponding sampling based inversion with measured near--field data provides another direction for future work.


\end{document}